\theoremstyle{plain}
\newtheorem{theorem}{Theorem}[section]
\newtheorem*{theorem*}{Theorem}
\newtheorem{definition}[theorem]{Definition}
\newtheorem*{definition*}{Definition}
\newtheorem{lemma}[theorem]{Lemma}
\newtheorem{prop}[theorem]{Proposition}
\newtheorem*{prop*}{Proposition}
\newtheorem{rem}[theorem]{Remark}
\theoremstyle{definition}
\newtheorem{ex}[theorem]{Example}
\newtheorem*{mt*}{Main Theorem}
\def\frg{{\mathfrak{g}}}
\def\sqi{{\sqrt{-1\,}}}
\def\kod{{\operatorname{Kod}}}
\newcommand{\iso}{isomorphism}
\DeclareMathOperator{\Span}{Span}
\DeclareMathOperator{\Ker}{Ker}
\DeclareMathOperator{\im}{Im}
\DeclareMathSymbol{\Finv} {\mathord}{AMSb}{"60}
\DeclareMathOperator{\GL}{GL}
\DeclareMathOperator{\SL}{SL}
\DeclareMathOperator{\diag}{diag}
\newcommand{\Vol}{\hbox{\rm Vol}}
\newcommand\restrict[1]{\raisebox{-.5ex}{$|$}_{#1}}
\newcommand{\R}{\mathbb{R}}
\newcommand{\C}{\mathbb{C}}
\newcommand{\Z}{\mathbb{Z}}
\newcommand{\del}{\partial}
\newcommand{\delbar}{\bar{\partial}}
\newcommand{\N}{\mathbb{N}}
\numberwithin{equation}{section}
\DeclareFontFamily{U}{MnSymbolC}{}
\DeclareSymbolFont{MnSyC}{U}{MnSymbolC}{m}{n}
\DeclareFontShape{U}{MnSymbolC}{m}{n}{
    <-6>  MnSymbolC5
   <6-7>  MnSymbolC6
   <7-8>  MnSymbolC7
   <8-9>  MnSymbolC8
   <9-10> MnSymbolC9
  <10-12> MnSymbolC10
  <12->   MnSymbolC12}{}
\DeclareMathSymbol{\intprod}{\mathbin}{MnSyC}{'270}
\author{Andrea Cattaneo and Adriano Tomassini}
\address[Andrea Cattaneo, Adriano Tomassini]{
Dipartimento di Scienze Matematiche, Fisiche e Informatiche
Unit\`a di Matematica e Informatica\\
Universit\`a degli Studi di Parma\\
Parco Area delle Scienze 53/A, 43124\\
Parma, Italy}
\email{andrea.cattaneo@unipr.it}
\email{adriano.tomassini@unipr.it}
\title{\texorpdfstring{$\del\delbar$}{del-delbar}-Lemma and \texorpdfstring{$p$}{p}-K\"ahler structures on families of solvmanifolds}
\keywords{$\del\delbar$-Lemma; $p$-K\"ahler structure; Nakamura manifold; Dolbeault cohomology}
\thanks{The authors are partially supported by the project PRIN2022 “Real and Complex Manifolds: Geometry and Holomorphic Dynamics” (Project code: 2022AP8HZ9) and by GNSAGA of INdAM}
\subjclass[2020]{32J27, 57T15, 32Q99}
\date{\today}
\begin{document}
\begin{abstract}
We provide families of compact $(n+1)$-dimensional complex non K\"ahler manifolds satisfying the $\del\delbar$-Lemma, with holomoprhically trivial canonical bundle, carrying a balanced metric and with no $p$-K\"ahler structures. Such a construction extends to the completely solvable case in any dimension Nakamura's construction of low-dimensional holomorphically parallelizable solvmanifolds.
\end{abstract}

\maketitle

\tableofcontents

\section{Introduction}
The class of compact K\"ahler manifolds is a class of complex manifolds which is particularly well behaved with respect to many properties. As an example if $X$ is an $n$-dimensional compact K\"ahler manifold, from the topological point of view the Betti numbers of odd index are even and the Betti numbers of even index are positive, the de Rham complex is a differential graded algebra which is formal in the sense of Sullivan (see \cite[$\S$6, Main Theorem]{DGMS}); from the metric point of view the fundamental form of a K\"ahler metric satisfies the hard Lefschetz condition; from the complex point of view $X$ satisfies the \emph{$\del\delbar$-Lemma} (see \cite{DGMS}):
\[\Ker(\del) \cap \Ker(\delbar) \cap \im(d) = \im(\del\delbar),\]
which implies that the bidegree decomposition of $k$-forms into $(p, q)$-forms, with $p + q = k$, induces on the de Rham cohomology a Hodge structure:
\[H^k_{dR}(X, \C) \simeq \bigoplus_{p + q = k} H^{p, q}_{\delbar}(X), \qquad \overline{H^{p, q}_{\delbar}(X)} \simeq H^{q, p}_{\delbar}(X).\]
This way of stating the $\del\delbar$-Lemma (there are many others) makes it clear that its validity is a complex property of the manifold rather than a metric one. Futhermore, Harvey and Lawson in \cite[$\S$4]{HL} gave an intrinsic characterization of compact complex manifolds admitting a K\"ahler metric in terms of currents.

On the other hand, simple examples of compact complex manifolds are obtained as compact quotient of a solvable Lie group by a lattice (which is what we mean by a solvmanifold) endowed with a complex structure. Unfortunately, according to \cite[Main theorem]{Hasegawa} such manifolds are never K\"ahler unless they are finite quotients of complex tori. Nevertheless some of them carry  special metrics satisfying milder assumptions than K\"ahlerianity. In \cite{Michelsohn}, Michelsohn introduced and called \emph{balanced} a Hermitian metric with fundamental form $\omega$ such that $d(\omega^{n - 1}) = 0$, proving an intrinsic characterization of the existence of such metrics on a compact complex manifold. In \cite{AG} Abbena and Grassi  proved that any left-invariant Hermitian metric on a unimodular complex Lie group is balanced. In view of the Theorem of Wang \cite[Theorem 1]{W}, the result by Abbena and Grassi implies that any compact holomorphically parallelizable complex manifold admits a balanced metric. Sullivan introduced in \cite[Definition I.3]{S} the notion of {\em transversality} in the context of cone structures, namely a continuos field of cones of $p$-vectors on a manifold. Following Sullivan, Alessandrini and Andreatta gave the definition of \emph{$p$-K\"ahler manifold} in \cite[Definition 1.11]{AA}. Both K\"ahler and balanced metrics have a nice feature with respect to the complex submanifolds of the ambient space: the K\"ahler form of a K\"ahler metric restricts to a volume form on any $1$-dimensional submanifold, while for a balanced metric the form $\omega^{n - 1}$ restricts to a volume form on any $(n - 1)$-dimensional submanifold. By ``interpolation'' of these two extremes we have the concept of a $p$-K\"ahler manifold.

Roughly speaking, a $p$-K\"ahler manifold is a complex manifold $X$ endowed with $p$-K\"ahler form, namely a $d$-closed real $(p, p)$-form $\omega$ which restricts to a volume form on every $p$-dimensional submanifold of $X$. The cases $p = 1$ and $p = n - 1$ are peculiar because in these cases the $p$-K\"ahler form is respectively the K\"ahler form of a K\"ahler metric or the $(n - 1)^{\text{th}}$ power of the K\"ahler form of a balanced metric, hence they correspond to K\"ahler manifolds and balanced manifolds respectively. In contrast, the intermediate K\"ahler degrees $p = 2, \ldots, n - 2$ are genuinely complex and not metric invariants of the manifold, since a $p$-K\"ahler manifold whose $p$-K\"ahler form is the $p^\text{th}$ power of the K\"ahler form of a metric is necessarily K\"ahler (see \cite[Proposition 2.1]{AB1}). This shows the relevance of $p$-K\"ahler manifolds in non-K\"ahler geometry, as they stress more the complex geometrical aspects rather than the metric ones.


As far as we know, all known examples of manifolds satisfying the $\del\delbar$-Lemma carry a balanced metric. In this paper we want to investigate the possible relations between $p$-K\"ahler structures and $\del\delbar$-Lemma, and provide families of $p$-K\"ahler manifolds. More precisely, we prove the following:

\begin{theorem}\label{thm: main}
Let $M \in \operatorname{SL}(n, \Z)$ be digonalizabile over $\R$ with positive eigenvalues and let $P \in \operatorname{GL}(n, \R)$ be such that $PMP^{-1}$ is diagonal. Let $\tau \in \R \smallsetminus \{ 0 \}$. Then there exists an $(n + 1)$-dimensional compact complex manifold $N = N_{M, P, \tau}$ such that:
\begin{enumerate}
\item\label{item: construction} $N$ is a compact solvmanifold endowed with an invariant complex structure;
\item\label{item: balanced and p-kahler} $N$ admits a balanced metric and is not $p$-K\"ahler for any $p = 1, \ldots, n - 1$;
\item\label{item: kodaira} $N$ has holomorphically trivial canonical bundle, in particular the Kodaira dimension of $N$ is $0$;
\item\label{item: del-delbar-lemma} if moreover $\tau$ satisfies the assumption \eqref{eq: technical condition}, then $N$ satisfies the $\del\delbar$-Lemma and has no K\"ahler structure.
\end{enumerate}
\end{theorem}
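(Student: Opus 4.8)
The plan is to realise $N$ as an explicit solvmanifold. Write $e^{\lambda_1},\dots,e^{\lambda_n}>0$ for the eigenvalues of $M$, so that $\log M:=P^{-1}\diag(\lambda_1,\dots,\lambda_n)P\in\GL(n,\R)$ and $\sum_j\lambda_j=\log\det M=0$; we assume $M\neq\mathrm{Id}$ (otherwise $N$ would be a torus and \ref{item: balanced and p-kahler}, \ref{item: del-delbar-lemma} would be false). Take $G=\R^2\ltimes_\phi\C^n$, where $(s,t)\in\R^2$ acts $\C$-linearly on $\C^n$ by $\phi(s,t)=\exp(s\,\log M)$; equivalently $\frg$ has a basis $X_0,Y_0,X_1,Y_1,\dots,X_n,Y_n$ with the only nonzero brackets $[X_0,X_j]=\lambda_j X_j$, $[X_0,Y_j]=\lambda_j Y_j$, so $G$ is completely solvable and unimodular. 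Since $\exp(\log M)=M\in\SL(n,\Z)$ preserves $\Z^n+\sqi\,\Z^n$, the subgroup $\Gamma=\Z^2\ltimes(\Z^n+\sqi\,\Z^n)$ with $(1,0)$ acting by $M$ and $(0,1)$ trivially is a lattice, and $N:=\Gamma\backslash G$ is a compact solvmanifold of complex dimension $n+1$. On $\frg$ put the invariant almost complex structure $J$ which is standard on each $\Span_\R(X_j,Y_j)$ and satisfies $JX_0=\tau Y_0$, $JY_0=-\tfrac1\tau X_0$; a bracket-by-bracket check of the Nijenhuis tensor (only $[X_0,X_j]$, $[X_0,Y_j]$ are relevant, and they cancel since $J$ is scalar on $\Span_\R(X_j,Y_j)$) shows $J$ is integrable, giving \ref{item: construction}. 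Let $\varphi^0,\dots,\varphi^n$ be the induced invariant $(1,0)$-coframe and $e^0:=\re\varphi^0$; the structure equations reduce to $d\varphi^0=0$ and $d\varphi^j=-\lambda_j\,e^0\wedge\varphi^j$, i.e.\ $\del\varphi^j=-\tfrac{\lambda_j}{2}\varphi^0\wedge\varphi^j$, $\delbar\varphi^j=-\tfrac{\lambda_j}{2}\bar\varphi^0\wedge\varphi^j$.

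Parts \ref{item: balanced and p-kahler} and \ref{item: kodaira} follow from these equations. Since $\sum_j\lambda_j=0$, the invariant form $\Psi=\varphi^0\wedge\varphi^1\wedge\dots\wedge\varphi^n$ has $\delbar\Psi=-\tfrac12\big(\textstyle\sum_j\lambda_j\big)\bar\varphi^0\wedge\Psi=0$, hence is a nowhere-vanishing holomorphic section of $K_N$; so $K_N$ is holomorphically trivial and $\kod(N)=0$, which is \ref{item: kodaira}. The invariant Hermitian metric $\omega=\tfrac{\sqi}{2}\big(\varphi^0\wedge\bar\varphi^0+\sum_j\varphi^j\wedge\bar\varphi^j\big)$ satisfies $d\omega^{n}=0$ (the only monomials in $\omega^{n}$ are $\prod_{i=1}^n\varphi^i\wedge\bar\varphi^i$, closed because $\sum_i\lambda_i=0$, and $\varphi^0\wedge\bar\varphi^0\wedge\prod_{i\in S}\varphi^i\wedge\bar\varphi^i$ with $|S|=n-1$, closed because $\varphi^0\wedge\bar\varphi^0\wedge e^0=0$), so it is balanced. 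For the non-$p$-K\"ahler statement, given $p\in\{1,\dots,n-1\}$ pick $S\subset\{1,\dots,n\}$ with $|S|=n-p$ and $\mu_S:=\sum_{i\in S}\lambda_i\neq0$ (take $S$ to be the $n-p$ largest indices; since $M\neq\mathrm{Id}$ their $\lambda$-sum is positive). The structure equations give
\[\sqi\,\del\delbar\Big(\sqi^{\,n-p}\prod_{i\in S}\varphi^i\wedge\bar\varphi^i\Big)=\mu_S^{\,2}\,\sqi^{\,n+1-p}\,\varphi^0\wedge\bar\varphi^0\wedge\prod_{i\in S}\varphi^i\wedge\bar\varphi^i=:\eta_p,\]
a nonzero $\del\delbar$-exact positive-semidefinite $(n+1-p,n+1-p)$-form. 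If $\Omega$ were a $p$-K\"ahler form on $N$, then $\int_N\Omega\wedge\eta_p>0$, while $\int_N\Omega\wedge\eta_p=\pm\int_N\del\delbar\Omega\wedge(\cdots)=0$ because $d\Omega=0$ forces $\del\delbar\Omega=0$; this contradiction (the Harvey--Lawson/Alessandrini--Andreatta duality, \cite{HL}, \cite{AA}) proves \ref{item: balanced and p-kahler}. In particular $N$ is never K\"ahler, the last assertion of \ref{item: del-delbar-lemma}; this also follows from \cite{Hasegawa} since $\pi_1(N)=\Z^2\ltimes_M\Z^{2n}$ is not virtually abelian.

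The $\del\delbar$-Lemma for $N$ under \eqref{eq: technical condition} is then proved in two steps. \emph{(i) Invariant level.} The bigraded complex $\big(\bigwedge^{\bullet,\bullet}\frg^*,\del,\delbar\big)$ is the direct sum, over pairs $(A,B)$ of subsets of $\{1,\dots,n\}$, of the sub-double-complexes spanned by the four monomials obtained from $\varphi_A\wedge\bar\varphi_B$ (with $\varphi_A:=\bigwedge_{i\in A}\varphi^i$) by optionally adjoining $\varphi^0$ on the left and/or $\bar\varphi^0$; on such a block $\del$, resp.\ $\delbar$, acts by adjoining $\varphi^0$, resp.\ $\bar\varphi^0$, with coefficient $\pm\tfrac12(\mu_A+\mu_B)$. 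Thus each block is a $2\times2$ ``square'' when $\mu_A+\mu_B\neq0$ and a direct sum of four ``dots'' when $\mu_A+\mu_B=0$; both are $\del\delbar$-compatible, so the invariant double complex satisfies the $\del\delbar$-Lemma, and from the splitting one reads off that the invariant cohomologies obey $\sum_{p+q=k}\dim H^{p,q}_{BC}=b_k$ for every $k$. \emph{(ii) From $\frg$ to $N$.} As $G$ is completely solvable, Hattori's theorem identifies $H^\bullet_{dR}(N)$ with $H^\bullet(\frg)$, and the content of \eqref{eq: technical condition} is the non-resonance relating $\tau$ and the $\lambda_j$ that makes $H^{p,q}_\delbar(N)$ and $H^{p,q}_{BC}(N)$ also be computed by invariant forms: indeed $N$ is a holomorphic bundle $N\to E_\tau$ over an elliptic curve with fibre the complex torus $T=\C^n/(\Z^n+\sqi\,\Z^n)$, the relevant (Borel-type) spectral sequence degenerates because $E_\tau$ is K\"ahler, and the pieces $H^{c,d}_\delbar(T)$ of the fibre cohomology assemble into flat holomorphic line bundles on $E_\tau$ whose weights are sums $\sum_j n_j\lambda_j$ ($n_j\in\{0,1,2\}$); \eqref{eq: technical condition} forces each nonzero weight to define a nontrivial degree-$0$ line bundle, which has no cohomology, so that only the weight-zero (invariant) classes survive — this comparison can also be quoted from the solvmanifold theorems of Kasuya (Dolbeault) and Angella--Kasuya (Bott--Chern). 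Combining (i) and (ii) with the cohomological characterisation of the $\del\delbar$-Lemma (via the identity $\sum_{p+q=k}\dim H^{p,q}_{BC}=b_k$), $N$ satisfies the $\del\delbar$-Lemma, which completes \ref{item: del-delbar-lemma}.

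I expect the main obstacle to be step (ii): fixing the precise form of \eqref{eq: technical condition} and establishing the Dolbeault and, above all, the Bott--Chern comparison with invariant forms, the latter being the most delicate comparison for solvmanifolds. By contrast, step (i) and parts \ref{item: construction}--\ref{item: kodaira} are essentially bookkeeping with the structure equations of $\frg^*$.
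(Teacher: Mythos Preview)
Your treatment of items~\ref{item: construction}--\ref{item: kodaira} is correct and essentially the same as the paper's, up to cosmetic choices (you put $\tau$ into $J$ rather than into the lattice $\Gamma'_\tau$, and you use a $\del\delbar$-exact test form for non-$p$-K\"ahlerianity where the paper uses a $d$-exact one; both work). Your existence argument for a multiindex $S$ with $\mu_S\neq 0$ via ``take the $n-p$ largest $\lambda$'s'' is a nice shortcut; the paper instead proves the contrapositive as a separate linear-algebra lemma.

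For item~\ref{item: del-delbar-lemma} the approaches genuinely diverge. The paper does not touch Bott--Chern cohomology at all: it applies Kasuya's explicit finite-dimensional subcomplex $B^{\bullet,\bullet}\subset A^{\bullet,\bullet}(N)$, shows directly that $\delbar\equiv 0$ on $B^{\bullet,\bullet}$ (so $H^{p,q}_{\delbar}(N)\simeq B^{p,q}$), checks that under \eqref{eq: technical condition} the generators are invariant and $d$-closed, computes $H^k_{dR}$ via Hattori, and then verifies the characterisation of the $\del\delbar$-Lemma from \cite{ASTT} (Hodge symmetry plus $H^k_{dR}\simeq\bigoplus_{p+q=k}H^{p,q}_{\delbar}$ with $d$-closed representatives). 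Your route---squares/dots decomposition of the invariant double complex, then a comparison step for Dolbeault and Bott--Chern---is conceptually cleaner and makes transparent \emph{why} the $\del\delbar$-Lemma holds, but it trades the paper's elementary self-contained computation for an appeal to the Angella--Kasuya Bott--Chern comparison, which is a heavier external input. You are right that step~(ii) is where the work lies; the paper's advantage is that it bypasses Bott--Chern entirely by using only Dolbeault and de~Rham, both of which are covered by Kasuya and Hattori respectively.

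One genuine slip: the identity $\sum_{p+q=k}h^{p,q}_{BC}=b_k$ is \emph{not} by itself a characterisation of the $\del\delbar$-Lemma. The correct numerical criterion (Angella--Tomassini) is $\sum_{p+q=k}(h^{p,q}_{BC}+h^{p,q}_{A})=2b_k$, or equivalently that the natural maps among Bott--Chern, Dolbeault, Aeppli and de~Rham cohomology are all isomorphisms. Your argument actually gives this stronger conclusion (since the invariant double complex is a sum of squares and dots and the inclusion into $A^{\bullet,\bullet}(N)$ is an $E_1$-isomorphism once you have the Dolbeault and Bott--Chern comparisons), so the fix is just to state the correct criterion.
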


This theorem provides examples of $(n + 1)$-dimensional manifolds of Kodaira dimension $0$ satisfying the $\del\delbar$-Lemma but with no $p$-K\"ahler structures for $p \neq n$. More in general, since $N$ is a compact solvmanifold of completely solvable type different from a torus, by Hasegawa \cite[Main Theorem]{Hasegawa} has no K\"ahler structure. For other results on $p$-K\"ahler manifolds see \cite{RWZ,HMT,FM} and the references therein.

The construction of $p$-K\"ahler manifolds and of manifolds satisfying the $\del\delbar$-Lemma is a very hard (but challenging) problem. The first examples of $p$-K\"ahler manifolds are the $\eta\beta_{2n + 1}$ by Alessandrini and Bassanelli (see \cite[$\S$4]{AB1}): they are $(2n + 1)$-dimensional nilmanifolds which are $p$-K\"ahler for every $p \geq n + 1$. But since they are nilmanifolds (not even homeomorphic to tori), they can not satisfy the $\del\delbar$-lemma. Observe that it is possible to have manifolds satisfying the $\del\delbar$-Lemma which are also $p$-K\"ahler for some $p$, e.g., in \cite[$\S$4]{AB2} there is an example of a non-K\"ahlerian Moishezon manifold (which then satisfies the $\del\delbar$-Lemma) of dimension $5$ which is $p$-K\"ahler for every $p \geq 2$.

The plan of the paper is as follows. In Section \ref{preliminaries} we recall the basic definitions concerning $p$-K\"ahlerianity and the $\del\delbar$-Lemma. In Section \ref{sect: construction} we show the construction of Nakamura manifolds, which extends to the completely solvable case in any dimension  Nakamura's construction of low-dimensional holomorphically parallelizable solvmanifolds in \cite[p.90 Case III-(3a)]{N}. This section also proves the first item of Theorem \ref{thm: main}. Observe that our construction ensures the existence of a lattice in a completely solvable Lie group, which is a non-trivial fact. Item \eqref{item: balanced and p-kahler} of Theorem \ref{thm: main} is shown in Section \ref{sect: balanced} and \ref{sect: p-kahler}; item \eqref{item: kodaira} in Section \ref{sect: kodaira}. To show that item \eqref{item: del-delbar-lemma} holds in Section \ref{sect: del-delbar-lemma} we need to compute the Dolbeault and de Rham cohomology of our manifolds. This is done in Section \ref{sect: dolbeault} and \ref{sect: de Rham} respectively, using a powerful tool by Kasuya (see \cite{Kasuya}) and a classical result by Hattori (see \cite{Hattori}). In \cite{Kasuya1} a similar situation is studied, more in detail Kasuya gives a general framework in the context of solvmanifolds obtained as compact quotients of $G = \C^n \ltimes_\phi \C^m$ and proved that under suitable assumptions it is possible to define a Hermitian metric for which the Hodge symmetry and decomposition hold for Dolbeault and de Rham forms; this implies the validity of the $\del\delbar$-Lemma. However, our construction gives different examples from those obtained by Kasuya. Finally, Section \ref{sect: examples} is devoted to the construction of explicit examples of Nakamura manifolds.

\vspace{0.2cm}\noindent

{\em \underline{Acknowledgments:}} The authors would like to thank Hisashi Kasuya for bringing to their attention his paper \cite{Kasuya1}.

\section{Preliminaries}\label{preliminaries}
In this Section we will fix the notation we will use in the paper and we recall the notions of $p$-{\em K\"ahler structure} and of $\del\delbar$-{\em Lemma}. We start by some linear algebra definitions.

Let $V$ be a real $2n$-dimensional vector space endowed with a complex structure $J$, that is an automorphism $J$ of $V$ satisfying $J^2=-\hbox{\rm id}_V$. Let $V^*$ be the dual space of $V$ and denote by the same symbol the complex structure on $V^*$ naturally induced by $J$ on $V$. Then the complexified 
$V^*_\C$ decomposes as the direct sum of the $\pm \sqi$-eigenspaces, $V^{1,0}$ and $V^{0,1}$, of the extension of $J$ to $V^*_\C$, given by
\[\begin{array}{l}
V^{1,0} = \{ \varphi \in V^*_\C \,\,\,\vert\,\,\, J \varphi = \sqi \varphi \} =  \{ \alpha - \sqi J \alpha \,\,\,\vert\,\,\, \alpha \in V^* \}\\[10pt]
V^{0,1} = \{ \psi \in V^*_\C \,\,\,\vert\,\,\, J \psi = -\sqi \psi \} = \{ \beta + \sqi J \beta \,\,\,\vert\,\,\, \beta \in V^* \},
\end{array}\]
that is 
\[V^*_\C = V^{1,0} \oplus V^{0,1}.\]
According to the above decomposition, the space $\Lambda^r(V^*_\C)$ of complex $r$-covectors on $V_\C$ decomposes as
\[\Lambda^r(V^*_\C) = \bigoplus_{p + q = r} \Lambda^{p,q}(V^*_\C),\]
where
\[\Lambda^{p,q}(V^*_\C) = \Lambda^p(V^{1,0}) \otimes \Lambda^q(V^{0,1}).\]

Let $\{ \varphi^1, \ldots, \varphi^n \}$ be a basis of $V^{1,0}$, then
\[\{ \varphi^{i_1} \wedge \cdots \wedge \varphi^{i_p} \wedge \overline{\varphi^{j_1}} \wedge \cdots \wedge \overline{\varphi^{j_q}} \,\,\,\vert\,\,\, 1 \leq i_1 < \cdots < i_p \leq n ,\,\, 1 \leq j_1 < \cdots < j_q \leq n \}\]
is a basis of $\Lambda^{p,q}(V^*_\C)$. Set $\sigma_p = \sqi^{p^2} 2^{-p}$. Then, given any $\varphi \in \Lambda^{p, 0}(V^*_\C)$ we have that
\[\overline{\sigma_p \, \varphi \wedge \overline{\varphi}} = \sigma_p \, \varphi \wedge \overline{\varphi},\]
that is $\sigma_p \, \varphi \wedge \overline{\varphi}$ is a real $(p, p)$-form. We will denote by
\[\Lambda_{\R}^{p, p}(V^*_\C) = \{ \psi \in \Lambda^{p, p}(V^*_\C) \,\,\,\vert\,\,\, \psi = \overline{\psi}\},\]
the space of real $(p,p)$-covectors.
Note that the complex structure $J$ acts on the space of real $k$-covectors $\Lambda^k(V^*)$ by setting, for any given $\alpha\in \Lambda^k(V^*)$,
\[J\alpha (V_1, \ldots, V_k) = \alpha(JV_1, \ldots, JV_k).\]
It is immediate to check that if $\psi \in \Lambda_{\R}^{p, p}(V^*_\C)$ then $J\psi = \psi$. For $k = 2$, the converse also holds.

Denoting by 
\[\Vol = \left( \frac{\sqi}{2} \varphi^1 \wedge \overline{\varphi^1} \right) \wedge \cdots \wedge \left( \frac{\sqi}{2} \varphi^n \wedge \overline{\varphi^n} \right),\]
we obtain that 
\[\Vol = \sigma_n \varphi^1 \wedge \cdots \wedge \varphi^n \wedge \overline{\varphi^1} \wedge \cdots \wedge \wedge \overline{\varphi^n},\]
that is $\Vol$ is a volume form on $V$. A real $(n, n)$-form $\psi$ is said to be {\em positive}, respectively {\em strictly positive}, if 
\[\psi = c \Vol,\] 
where $c \geq 0$, respectively $c > 0$.

By definition, $\psi \in \Lambda^{p,0}(V^*_\C)$ is said to be {\em simple} or {\em decomposable} if
\[\psi = \eta^1 \wedge \cdots \wedge \eta^p,\]
for suitable $\eta^1, \ldots, \eta^p \in V^{1,0}$. Let $\Omega \in \Lambda_{\R}^{p, p}(V^*_\C)$. Then $\Omega$ is said to be {\em transverse} if, given any non-zero simple $(n - p)$-covector $\psi$, the real $(n, n)$-form
\[\Omega \wedge \sigma_{n - p} \psi \wedge \overline{\psi}\]
is strictly positive. 

Let $(M, J)$ be a complex manifold of real dimension $2n$, that is $M$ is a $2n$-dimensional smooth manifold endowed with a smooth $(1, 1)$-tensor field $J$ satisfying $J^2 = -\hbox{id}_{TM}$ and the integrability condition $N_J = 0$, where $N_J$ is the Nijenhuis tensor of $J$ defined, for any given pair of vector fields $X, Y$ on $M$, as
\[N_J(X, Y) = [JX, JY] - [X, Y] - J[JX, Y] - J[X, JY].\]
The complex structure $J$ extends to the complexified $T_\C M$ of the tangent bundle $TM$ and $T_\C M = T^{(1, 0)}M \oplus T^{(0, 1)}M$, where $T^{(1, 0)}M$, respectively $T^{(0, 1)}M$ denote the $\sqi$-eigenbundle, respectively the $-\sqi$-eigenbundle, of $J$. Accordingly, the bundle of complex $k$-forms on $M$ decomposes as the direct sum of the bundle $\Lambda^{p,q}M$, that is
\[\Lambda^k(M,\C) = \bigoplus_{p + q = k} \Lambda^{p, q}M.\]
We will denote by $A^{p, q}(M)$ the space of complex $(p, q)$-forms, which is the space of smooth sections of the bundle $\Lambda^{p,q}(M)$ and by $A_{\R}^{p, p}(M)$ the space of real $(p, p)$-forms. With respect to the above decomposition, the exterior differential $d$ acting on $A^{p, q}(M)$ splits as $d = \del + \delbar$, where
\begin{equation}
\del\restrict{A^{p, q}(M)} := \pi^{p + 1, q} \circ d \restrict{A^{p, q}(M)}, \qquad \delbar\restrict{A^{p, q}(M)} := \pi^{p , q + 1} \circ d\restrict{A^{p,q}(M)},
\end{equation}
are the projections of $d(A^{p, q}(M))$ onto $A^{p + 1, q}(M)$ and $A^{p, q + 1}(M)$ respectively. Then the {\em $(p, q)$-Dolbeault cohomology group} of the complex manifold $M$ is defined as
\[H^{p, q}_{\delbar}(M) = \frac{\Ker \delbar \cap A^{p, q}(M)}{\im \delbar \cap A^{p, q}(M)}.\]
The notion of positivity on complex vector spaces can be transferred pointwise to complex manifolds. 

\begin{definition}
Let $(M, J)$ be a complex manifold of real dimension $2n$ and let $1 \leq p \leq n$. A $p$-{\em K\"ahler structure} on $(M, J)$ is a closed real transverse $(p, p)$-form $\Omega$, that is $\Omega$ is $d$-closed and, at every $x \in M$, $\Omega_x \in \Lambda^{p, p}_{\R}(T_x^*M)$ is transverse. The triple $(M, J, \Omega)$ is said to be a {\em p-K\"ahler manifold}.
\end{definition}

Finally, we recall that, according to Deligne, Griffiths, Morgan and Sullivan, a compact complex manifold $(M, J)$ is said to satisfy the \emph{$\del\delbar$-lemma} (see \cite[5.21, Proposition 5.17 and Lemma 5.15 (a)]{DGMS}) if
\[\Ker\del \cap \Ker\delbar \cap \im (d) = \im\del\delbar.\] 
The above condition is equivalent to say, (see \cite[5.21 and Proposition 5.17]{DGMS} and also \cite[Corollary 8 and Definition 4]{ASTT}) that for every $k\in\N$, the following conditions hold:
\begin{enumerate}
\item[(1)] There exist \iso s
\[H^{p, q}_{\delbar}(M) \simeq \overline{H^{q, p}_{\delbar}(M)},\,  p + q = k, \quad \text{and} \quad H^k_{dR}(M) \simeq \bigoplus_{p + q = k}H^{p, q}_{\delbar}(M).\]
\item[(2)] Every class in $H^{p, q}_{\delbar}(M)$, $p + q = k$, admits a representative $\omega$ with $\partial \omega = 0$ and $\delbar \omega = 0$, i.e., $d\omega = 0$. Moreover, the assignment $\omega \longmapsto \bar{\omega}$ induces the first \iso\ above.
\item[(3)] Every class in $H^k_{dR}(M)$ admits a representative $\omega$ which may be written as $\omega = \sum_{p + q = k} \omega^{p, q}$, where $\omega^{p, q}$ is a  $(p, q)$-form with $d\omega^{p, q} = 0$. Moreover, the assignment $\omega \longmapsto (\omega^{p, q})_{p + q = k}$ induces the second \iso\ above.
\end{enumerate}


\section{Construction of Nakamura manifolds}\label{sect: construction}
One of the first and most known examples of $3$-dimensional compact complex non K\"ahler manifold has been given by Nakamura in \cite[p.90 Case III-(3a)]{N}. His examples are obtained as a compact quotient of a complex solvable (non nilpotent) Lie group $G$ and thus they are holomorphically parallelizable. In contrast to the nilpotent case, one of the main difficulties is to construct a lattice in $G$. Starting from his construction and the one given by Kasuya in \cite[Example 5.1]{Kasuya}, we extend such a construction to complex dimension $(n+1)$. As we will see, our examples are not holomorphically parallelizable, but of completeley solvable type. 

Let $M \in \SL(n, \Z)$ be a matrix with positive eigenvalues and diagonalizable over $\R$. Then there exist a matrix $P \in \GL(n, \R)$ and numbers $\lambda_1, \ldots, \lambda_n \in \R$ such that
\[P M P^{-1} = \diag\left( e^{\lambda_1}, \ldots, e^{\lambda_n} \right) = \Lambda.\]

\begin{rem}\label{rem: sum lambda_i vanishes}
As $M \in \SL(n, \Z)$ we deduce that
\[\sum_{i = 1}^n \lambda_i = 0.\]
\end{rem}

We define the group homomorphism
\begin{equation}
\begin{array}{rccl}
\rho: & \C & \longmapsto & \GL(n, \C)\\
 & w & \longmapsto & \diag\left( e^{\frac{1}{2}\lambda_1 (w + \bar{w})}, \ldots, e^{\frac{1}{2}\lambda_n (w + \bar{w})} \right)
\end{array}
\end{equation}
and consider the semidirect product $(G_M; *) = \C \ltimes_\rho \C^n$. Explicitly, the group law $*$ is given by
\begin{equation}\label{eq: multiplication}
(w', z_1', \ldots, z_n') * ( w, z_1, \ldots, z_n) = \left(w' + w, z_1' + e^{\frac{1}{2}\lambda_1 (w + \bar{w})} z_1, \ldots, z_n' + e^{\frac{1}{2}\lambda_n (w + \bar{w})} z_n \right).
\end{equation}

Fix $\tau \in \R \smallsetminus \{ 0 \}$ and let
\[\Gamma'_\tau = \Z \oplus \sqi \tau \cdot \Z \subseteq \C, \qquad \Gamma''_P = P \cdot \Z^n \oplus \sqi P \cdot \Z^n \subseteq \C^n.\]
Observe that $\rho(1) = \Lambda$, which is invertible, so $\rho(1) \cdot \Gamma''_P = \Gamma''_P$. It follows that $\rho(m) \cdot \Gamma''_P = \Gamma''_P$ for every $m \in \Z$. Similarly, $\rho(\sqi \tau) = I_n$ and so $\rho(\sqi \tau m) \cdot \Gamma''_P = \Gamma''_P$ for every $m \in \Z$. It follows that the action of $\Gamma'_\tau$ on $\C^n$ via $\rho$ preserves the subgroup $\Gamma''_P$ and so
\[\Gamma_{P, \tau} = \Gamma'_\tau \ltimes_\rho \Gamma''_P\]
is a subgroup of $G_M$.

We define the \emph{Nakamura manifold} associated to $(M, P, \tau)$
\[N_{M, P, \tau} = \Gamma_{P, \tau} \backslash G_M\]
which is a compact complex manifold of dimension $n + 1$. In the sequel we will denote it simply by $N$, keeping the dependence from $M$, $P$ and $\tau$ as understood.

\begin{rem}
Up to now we made no assumption on the eigenvalues $\lambda_i$ of $M$. Assume that $t$ of them vanish: it follows from \eqref{eq: multiplication} that we can extract from $G_M$ a direct factor of $\C^t$, i.e., that $G_M = G'_{M'} \times \C^t$, where $G'_{M'}$ is constructed as a semidirect product $G'_{M'} = \C \ltimes_{\rho'} \C^{n - t}$ as in this section. As a consequence, $N_{M, P, \tau}$ also splits as
\[N_{M, P, \tau} \simeq N'_{M', P', \tau} \times T,\]
where $T$ is a $t$-dimensional complex torus. For this reason, from now \emph{we will assume that $\lambda_i \neq 0$ for every $i = 1, \ldots, n$}.
\end{rem}

Observe that $G_M$ is both a complex manifold and a group, but it is not a complex Lie group as by \eqref{eq: multiplication} the multiplication $*$ is not holomorphic.

The $(1, 0)$-forms on $G_M$
\begin{equation}
\varphi^0 = dw\, \qquad \varphi^i = e^{-\frac{1}{2}\lambda_i (w + \bar{w})} dz_i \qquad i = 1, \ldots, n,
\end{equation}
define a set of $n + 1$ forms on $G_M$ which are $\Gamma_{P, \tau}$-invariant, hence descend to a set of $(1, 0)$-forms on $N_{M, P, \tau}$.

This result is immediate.
\begin{lemma}
The following structure equations hold on $N_{M, P, \tau}$:
\begin{equation}
d\varphi^0 = 0, \qquad d\varphi^i = -\frac{1}{2} \lambda_i (\varphi^0 + \bar{\varphi}^0) \wedge \varphi^i \qquad i = 1, \ldots, n.
\end{equation}
\end{lemma}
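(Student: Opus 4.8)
The plan is to verify the structure equations by direct computation from the definition of the forms $\varphi^0, \varphi^1, \ldots, \varphi^n$ on $G_M$, and then observe that since these forms are $\Gamma_{P,\tau}$-invariant (as already noted in the excerpt), the identities descend to $N_{M,P,\tau}$. So it suffices to work on $G_M$ with global coordinates $(w, z_1, \ldots, z_n)$.

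First I would treat $\varphi^0 = dw$. This is an exact form, so $d\varphi^0 = d(dw) = 0$ immediately, which is the first equation. Next, for $\varphi^i = e^{-\frac{1}{2}\lambda_i(w + \bar w)} dz_i$, I apply the Leibniz rule:
\[
d\varphi^i = d\!\left( e^{-\frac{1}{2}\lambda_i(w + \bar w)} \right) \wedge dz_i + e^{-\frac{1}{2}\lambda_i(w + \bar w)} \, d(dz_i) = -\frac{1}{2}\lambda_i e^{-\frac{1}{2}\lambda_i(w + \bar w)}\left( dw + d\bar w \right) \wedge dz_i,
\]
using $d(dz_i) = 0$ and the chain rule for the exponential factor. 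Now I rewrite this in terms of the $\varphi$-basis: since $\varphi^0 = dw$, $\bar\varphi^0 = d\bar w$, and $e^{-\frac{1}{2}\lambda_i(w+\bar w)} dz_i = \varphi^i$, the right-hand side becomes $-\frac{1}{2}\lambda_i (\varphi^0 + \bar\varphi^0) \wedge \varphi^i$, which is exactly the claimed formula.

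The only remaining point is the descent to the quotient, but this is already established in the excerpt: the forms $\varphi^0, \ldots, \varphi^n$ are $\Gamma_{P,\tau}$-invariant, hence so is $d$ applied to them, and since $d$ commutes with pullback by the covering projection, the structure equations hold on $N_{M,P,\tau}$ as stated. I do not expect any genuine obstacle here — this lemma is, as the authors say, immediate; the only thing to be careful about is correctly differentiating the real-analytic (non-holomorphic) factor $e^{-\frac{1}{2}\lambda_i(w+\bar w)}$, which produces both a $dw$ and a $d\bar w$ term and is precisely what makes $G_M$ fail to be a complex Lie group.
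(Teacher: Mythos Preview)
Your proof is correct and is precisely the immediate computation the paper has in mind; the authors simply state ``This result is immediate'' without writing out the details. Your application of the Leibniz rule and the chain rule to the non-holomorphic factor $e^{-\frac{1}{2}\lambda_i(w+\bar w)}$, followed by the rewriting in the $\varphi$-basis and the descent argument, is exactly what is needed.
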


As a consequence $\{ \varphi^0, \varphi^1, \ldots, \varphi^n \}$ is a smooth trivialization of $TN$, which is not holomorphic as we are assuming that $\lambda_i \neq 0$ for every $i = 1, \ldots, n$. In particular, $N$ is not holomorphically parallelizable.

\section{Metric and cohomological properties of Nakamura manifolds}\label{sect: properties of Nakamura manifolds}

Throughout this section, we fix a Nakamura manifold $N$ associated to $(M, P, \tau)$ as constructed in Section \ref{sect: construction} and we show that Theorem \ref{thm: main} holds.

\subsection{Balanced metrics on \texorpdfstring{$N$}{N}}\label{sect: balanced}
We want to show that $N$ carries an invariant balanced metric, proving part of item \eqref{item: balanced and p-kahler} in Theorem \ref{thm: main}. Let
\begin{equation}\label{eq: metric 1}
\omega = \sqi \sum_{i = 0}^n \varphi^i \wedge \bar{\varphi}^i.
\end{equation}
Then
\[\omega^n = (\sqi)^n n! \sum_{i = 0}^n \varphi^0 \wedge \bar{\varphi}^0 \wedge \ldots \widehat{\varphi^i \wedge \bar{\varphi}^i} \wedge \ldots \varphi^n \wedge \bar{\varphi}^n,\]
and so
\begin{equation}\label{eq: metric 1 is balanced}
d\omega^n = (\sqi)^n n! \sum_{i = 0}^n \sum_{j \neq i} \varphi^0 \wedge \bar{\varphi}^0 \wedge \ldots d(\varphi^j \wedge \bar{\varphi}^j) \wedge \ldots \widehat{\varphi^i \wedge \bar{\varphi}^i} \wedge \ldots \varphi^n \wedge \bar{\varphi}^n = 0
\end{equation}
as $d\varphi^0 = d\bar{\varphi}^0 = 0$ and
\[d(\varphi^j \wedge \bar{\varphi}^j) = -\lambda_j \varphi^0 \wedge \varphi^j \wedge \bar{\varphi}^j + \lambda_j \varphi^j \wedge \bar{\varphi}^0 \wedge \bar{\varphi}^j.\]

\begin{prop}
The manifold $N$ carries an invariant balanced metric.
\end{prop}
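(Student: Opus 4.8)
The plan is straightforward, since essentially all the work has already been carried out in the computation preceding the statement. The metric $\omega$ defined in \eqref{eq: metric 1} is built from the $\Gamma_{P,\tau}$-invariant global $(1,0)$-forms $\varphi^0, \varphi^1, \ldots, \varphi^n$ on $N$, so it is by construction an invariant Hermitian metric on $N$: writing $\omega = \sqi \sum_{i=0}^n \varphi^i \wedge \bar\varphi^i$ exhibits it as a positive $(1,1)$-form, because in the unitary coframe $\{\varphi^i\}$ it has the standard diagonal expression $\frac{\sqi}{2}\sum_i 2\,\varphi^i\wedge\bar\varphi^i$, up to the harmless normalization constant. Hermitianity and positivity are pointwise linear-algebra facts that hold wherever the $\varphi^i$ form a coframe, i.e.\ everywhere on $N$.

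The only remaining point is that $\omega$ is \emph{balanced}, that is $d(\omega^{n}) = 0$ where $2n$ here should be read as $2(n+1) = \dim_\R N$, so that $\omega^n$ is the relevant top-minus-one power. (In the notation of the excerpt, $N$ has complex dimension $n+1$, and the balanced condition is $d\omega^{(n+1)-1} = d\omega^n = 0$.) First I would expand $\omega^n$ as in the displayed formula just before the statement: the $n$-th power of $\omega$ is, up to the constant $(\sqi)^n n!$, the sum over $i$ of the products $\varphi^0\wedge\bar\varphi^0\wedge\cdots\wedge\widehat{\varphi^i\wedge\bar\varphi^i}\wedge\cdots\wedge\varphi^n\wedge\bar\varphi^n$ omitting the $i$-th factor. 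Then I would apply $d$ term by term, use the Leibniz rule, and invoke the structure equations from the Lemma in Section \ref{sect: construction}: $d\varphi^0 = d\bar\varphi^0 = 0$, and $d(\varphi^j\wedge\bar\varphi^j) = -\lambda_j \varphi^0\wedge\varphi^j\wedge\bar\varphi^j + \lambda_j \varphi^j\wedge\bar\varphi^0\wedge\bar\varphi^j$. Each surviving summand of $d(\omega^n)$ then contains, for some $j\neq i$, the factor $d(\varphi^j\wedge\bar\varphi^j)$ wedged with all the $\varphi^k\wedge\bar\varphi^k$ for $k\neq i$; but $d(\varphi^j\wedge\bar\varphi^j)$ already contains $\varphi^j$ (resp.\ $\bar\varphi^j$) and also a $\varphi^0$ (resp.\ $\bar\varphi^0$), each of which is repeated among the remaining factors $\varphi^k\wedge\bar\varphi^k$ (since both $k=j$ and $k=0$ occur, as $i\neq j$ and, when $i\neq 0$, also $k=0$ is present; and when $i=0$ the repetition of $\varphi^j,\bar\varphi^j$ already kills the term). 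Hence every term vanishes by repetition of a $1$-form factor, giving $d(\omega^n)=0$, exactly as recorded in \eqref{eq: metric 1 is balanced}.

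Concretely, the proof is one line: the form $\omega$ of \eqref{eq: metric 1} is an invariant Hermitian metric on $N$ since the $\varphi^i$ descend to a global coframe on $N$, and the computation \eqref{eq: metric 1 is balanced} shows $d\omega^n = 0$, so $\omega$ is balanced; this proves the Proposition. There is no real obstacle here — the content is entirely in the bookkeeping of which $1$-forms repeat, which is the calculation displayed right before the statement; the ``hard'' parts (existence of the lattice, invariance of the $\varphi^i$, the structure equations) are established earlier in Section \ref{sect: construction} and are simply quoted.
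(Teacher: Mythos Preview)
Your proposal is correct and follows the same route as the paper: take $\omega = \sqi\sum_{i=0}^n \varphi^i\wedge\bar\varphi^i$, observe it is an invariant positive $(1,1)$-form (the paper verifies positivity by evaluating on the dual frame, you by noting the diagonal form --- these are the same check), and invoke the pre-computed identity \eqref{eq: metric 1 is balanced} for $d\omega^n=0$.

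One small correction to your re-explanation of that identity: in the case $i=0$ the terms are \emph{not} killed by repetition as you claim. After Leibniz, $d(\varphi^j\wedge\bar\varphi^j)$ \emph{replaces} the $j$-th factor rather than being wedged in addition to it, so $\varphi^j,\bar\varphi^j$ occur only once, and since $\varphi^0\wedge\bar\varphi^0$ is the omitted factor neither $\varphi^0$ nor $\bar\varphi^0$ is repeated either. The $i=0$ summand actually equals $-\bigl(\sum_{j=1}^n\lambda_j\bigr)(\varphi^0+\bar\varphi^0)\wedge\varphi^1\wedge\bar\varphi^1\wedge\cdots\wedge\varphi^n\wedge\bar\varphi^n$ and vanishes by Remark~\ref{rem: sum lambda_i vanishes}. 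This does not affect your proof, which in the end simply cites \eqref{eq: metric 1 is balanced}; it only affects the optional re-derivation you sketched.
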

\begin{proof}
Let $\omega$ be as in \eqref{eq: metric 1}. If we show that $\omega$ is positive, then $\omega$ is the K\"ahler form of an invariant balanced metric on $N$ by \eqref{eq: metric 1 is balanced}. Let $V_0, V_1, \ldots, V_n$ be the frame of $T^{1, 0}N$ dual to $\varphi^0, \varphi^1, \ldots, \varphi^n$ and let
\[V = \sum_{j = 0}^n f_j V_j \in \Gamma(N, T^{1, 0}N)\]
be a smooth vector field on $N$. Then
\[-\sqi \omega(V, \bar{V}) = \sum_{i = 0}^n |f_i|^2 \geq 0\]
and equality holds if and only if $V = 0$. This shows that
\[h(\cdot, \cdot) = \omega(J\cdot, \cdot) - \sqi \omega(\cdot, \cdot)\]
is a balanced metric on $N$.
\end{proof}

\subsection{\texorpdfstring{$p$}{p}-K\"ahlerianity of \texorpdfstring{$N$}{N}}\label{sect: p-kahler}
In this section we show that $N$ is not $p$-K\"ahler for some $1 < p < n$, thus showing the second part of item \eqref{item: balanced and p-kahler} of Theorem \ref{thm: main}.

\begin{rem}
Observe that $N$ can not be $1$-K\"ahler as being $1$-K\"ahler amounts to be K\"ahler, while $N$ is $n$-K\"ahler as being $n$-K\"ahler amounts to be a balanced manifold (racall that $n = \dim(X) - 1$).
\end{rem}

Let $1 \leq i_1 < \ldots < i_{n - p} \leq n$ be a multiindex of length $n - p$, with $0 \leq p \leq n - 1$. We compute that
\begin{equation}\label{eq: phi is exact}
\begin{array}{c}
d(\varphi^0 \wedge \varphi^{i_1} \wedge \bar{\varphi}^{i_1} \wedge \ldots \wedge \varphi^{i_{n - p}} \wedge \bar{\varphi}^{i_{n - p}}) =\\
= -\varphi^0 \wedge \sum_{j = 1}^{n - p} \varphi^{i_1} \wedge \bar{\varphi}^{i_1} \wedge \ldots \wedge d(\varphi^{i_j} \wedge \bar{\varphi}^{i_j}) \wedge \ldots \wedge \varphi^{i_{n - p}} \wedge \bar{\varphi}^{i_{n - p}} =\\
= -\varphi^0 \wedge \sum_{j = 1}^{n - p} \varphi^{i_1} \wedge \bar{\varphi}^{i_1} \wedge \ldots \wedge (-\lambda_{i_j} \varphi^0 \wedge \varphi^{i_j} \wedge \bar{\varphi}^{i_j} - \lambda_{i_j} \bar{\varphi}^0 \wedge \varphi^{i_j} \wedge \bar{\varphi}^{i_j}) \wedge \ldots \wedge \varphi^{i_{n - p}} \wedge \bar{\varphi}^{i_{n - p}} =\\
= \left( \sum_{j = 1}^{n - p} \lambda_{i_j} \right) \varphi^0 \wedge\bar{\varphi}^0 \wedge \varphi^{i_1} \wedge \bar{\varphi}^{i_1} \wedge \ldots \wedge \varphi^{i_{n - p}} \wedge \bar{\varphi}^{i_{n - p}}.
\end{array}
\end{equation}

The following lemma will be useful to show that $N$ is not $p$-K\"ahler.

\begin{lemma}\label{lemma: sum 0 on multiindices}
Let $n \in \Z_{\geq 2}$, $\lambda_1, \ldots, \lambda_n \in \R$ and $1 \leq k \leq n - 1$. Denote by $I = (1 \leq i_1 < \ldots < i_k \leq n)$ a multiindex of length $k$. If $\sum_{i_j \in I} \lambda_{i_j} = 0$ for every multiindex $I$ of length $k$ then $\lambda_1 = \ldots = \lambda_n = 0$.
\end{lemma}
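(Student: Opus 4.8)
The plan is to argue by a counting/linear-algebra argument on the sums over $k$-subsets. Fix $n \geq 2$ and $1 \leq k \leq n-1$, and suppose $\sum_{i_j \in I}\lambda_{i_j} = 0$ for every multiindex $I$ of length $k$. First I would dispose of the trivial case $k = 1$: then the hypothesis says $\lambda_i = 0$ for each single index $i$, and we are done. So assume $k \geq 2$, hence $n \geq 3$.

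The key observation is that any two $k$-subsets $I$ and $I'$ that differ in exactly one element give, upon subtracting the two vanishing relations, an equality $\lambda_a = \lambda_b$, where $\{a\} = I \setminus I'$ and $\{b\} = I' \setminus I$. Since $k \geq 2$ and $k \leq n-1$, for \emph{every} pair of distinct indices $a, b \in \{1,\dots,n\}$ one can find such a pair of $k$-subsets: pick any $(k-1)$-subset $S \subseteq \{1,\dots,n\} \setminus \{a,b\}$ (possible because $|\{1,\dots,n\}\setminus\{a,b\}| = n-2 \geq k-1$), and set $I = S \cup \{a\}$, $I' = S \cup \{b\}$. Therefore $\lambda_1 = \lambda_2 = \cdots = \lambda_n =: \lambda$. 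Substituting into any single relation $\sum_{i_j \in I}\lambda_{i_j} = 0$ with $|I| = k$ gives $k\lambda = 0$, hence $\lambda = 0$, so all $\lambda_i$ vanish.

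I do not anticipate a serious obstacle here; the only point requiring a moment's care is checking that the range $2 \leq k \leq n-1$ really does guarantee the existence of the auxiliary set $S$ of size $k-1$ inside the $(n-2)$-element complement of $\{a,b\}$ — this is exactly the inequality $n-2 \geq k-1$, i.e. $k \leq n-1$, which is part of the hypothesis. The case $k = 1$ must be handled separately precisely because then one cannot choose such an $S$ (and indeed the two-element-difference trick is not needed there). An alternative, slightly more computational route would be to note that summing all $\binom{n}{k}$ relations counts each $\lambda_i$ with multiplicity $\binom{n-1}{k-1}$, giving $\binom{n-1}{k-1}\sum_i \lambda_i = 0$ and hence $\sum_i\lambda_i = 0$; combined with the "swap" relations $\lambda_a = \lambda_b$ this again forces $\lambda_i \equiv 0$. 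Either way the argument is short, and I would present the swap-difference version as the cleaner one.
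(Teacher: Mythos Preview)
Your proof is correct and follows essentially the same approach as the paper's: handle $k=1$ separately, then for $k\geq 2$ use the ``swap'' trick of replacing one index in a $k$-subset by another to deduce $\lambda_a=\lambda_b$ for all pairs, and finish by substituting the common value into a single relation to get $k\lambda=0$. Your write-up is in fact slightly more careful than the paper's in explicitly verifying that the auxiliary $(k-1)$-set $S$ exists inside $\{1,\dots,n\}\setminus\{a,b\}$ precisely because $k\leq n-1$.
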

\begin{proof}
This is of course the case if $k = 1$. Assume now that $k \geq 2$ (which implies that $n \geq 3$). Choose two different indices $i, j \in \{ 1, \ldots, n \}$; as $k \neq n$ we can construct two different multiindices $I$ and $J$ of length $k$ as follows: $I$ contains $i$ but not $j$, and $J$ coincides with $I$ except that $i$ is replaced by $j$. As a consequence
\[\sum_{i_j \in I} \lambda_{i_j} = 0 = \sum_{i_j \in J} \lambda_{i_j} \Longrightarrow \lambda_i = \lambda_j.\]
This shows that $\lambda_1 = \ldots = \lambda_n$. To show that all of them vanish it suffices to show that one vanishes: choose any multiindex $I$ of length $k$, so
\[\sum_{i_j \in I} \lambda_{i_j} = 0 \Longrightarrow k \cdot \lambda_{i_1} = 0 \Longrightarrow \lambda_{i_1} = 0.\]
\end{proof}

\begin{prop}\label{prop: N non p-Kahler}
Let $1 \leq p \leq n - 1$. Then $N$ is not $p$-K\"ahler.
\end{prop}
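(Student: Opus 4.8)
The plan is to argue by contradiction: suppose $\Omega$ is a $p$-K\"ahler form on $N$ with $1\le p\le n-1$. The strategy is to pair $\Omega$ against the explicit exact $(n-p,n-p)$-forms produced in \eqref{eq: phi is exact} and use transversality together with Stokes' theorem to force $\sum_{j=1}^{n-p}\lambda_{i_j}=0$ for every multiindex, which by Remark~\ref{rem: sum lambda_i vanishes}, Lemma~\ref{lemma: sum 0 on multiindices} and the standing assumption $\lambda_i\ne 0$ gives a contradiction. Concretely, for each multiindex $I=(i_1<\dots<i_{n-p})$ of length $n-p$ in $\{1,\dots,n\}$ set
\[
\beta_I = \varphi^{i_1}\wedge\cdots\wedge\varphi^{i_{n-p}}, \qquad \psi_I = \varphi^0\wedge \beta_I,
\]
so $\psi_I$ is a nonzero simple (decomposable) $(n-p,0)$-form. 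Hmm — wait, $\psi_I$ has degree $n-p+1$; for the transversality pairing I actually want a simple $(n-p)$-covector, so I should instead take $\psi_I=\beta_I$ itself and examine $\Omega\wedge\sigma_{n-p}\,\beta_I\wedge\overline{\beta_I}$, while using $\varphi^0$ separately. Let me reorganize.

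First I would note that $N$ is compact and that the invariant top form $\mathrm{Vol}$ (built from $\varphi^0,\dots,\varphi^n$ as in Section~\ref{preliminaries}) is $d$-closed, so $\int_N$ makes sense and $\int_N d\eta=0$ for every $(2n+1)$-form $\eta$. Next, for a fixed multiindex $I$ of length $n-p$, consider the real $(n,n)$-form
\[
\Omega\wedge \sigma_{n-p}\,\beta_I\wedge\overline{\beta_I}.
\]
Since $\Omega$ is transverse and $\beta_I$ is a nonzero simple $(n-p)$-covector at every point, this form is strictly positive pointwise, i.e.\ equals $c_I(x)\,\mathrm{Vol}$ with $c_I(x)>0$; hence $\int_N \Omega\wedge\sigma_{n-p}\,\beta_I\wedge\overline{\beta_I}>0$. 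On the other hand I want to realize $\sigma_{n-p}\,\varphi^0\wedge\bar\varphi^0\wedge\beta_I\wedge\overline{\beta_I}$ (which differs from the previous pairing only by the factor $\varphi^0\wedge\bar\varphi^0$) as $d$ of something using \eqref{eq: phi is exact}: up to a nonzero constant, $\varphi^0\wedge\bar\varphi^0\wedge\beta_I\wedge\overline{\beta_I}$ is $\big(\sum_{j}\lambda_{i_j}\big)^{-1} d\big(\varphi^0\wedge \varphi^{i_1}\wedge\bar\varphi^{i_1}\wedge\cdots\big)$ whenever $\sum_j\lambda_{i_j}\ne 0$. Then $\Omega\wedge$ (that exact form) is itself exact because $d\Omega=0$, so its integral over $N$ vanishes; but I must still connect $\Omega\wedge\varphi^0\wedge\bar\varphi^0\wedge\beta_I\wedge\overline{\beta_I}$ to the strictly positive quantity $\Omega\wedge\beta_I\wedge\overline{\beta_I}$. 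The cleanest route is: the $(n,n)$-component of $\Omega$ wedged with $\sigma_{n-p}\beta_I\wedge\overline{\beta_I}$ forces $\Omega$ to contain the monomial $\varphi^0\wedge\bar\varphi^0\wedge(\text{complement of }I)$ with positive coefficient, and then to wedge once more and land in $\Lambda^{n,n}$ one is forced to use exactly that term; I would make this precise by writing $\Omega=\sum c_{A,B}\,\varphi^A\wedge\overline{\varphi^B}$ in the invariant frame (using that we may average $\Omega$ to an invariant form, since $N$ is a solvmanifold and the averaging operator preserves closedness, positivity and type) and tracking which coefficient survives.

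Thus the key steps, in order: (i) reduce to an invariant $\Omega$ by symmetrization; (ii) expand $\Omega$ in the invariant $(p,p)$-frame and observe that transversality against $\beta_I$ forces the coefficient of $\sqi^{\,p^2}2^{-p}\,\varphi^0\wedge\bar\varphi^0\wedge\varphi^{K_I}\wedge\overline{\varphi^{K_I}}$ to be positive, where $K_I$ is the complement of $I$ in $\{1,\dots,n\}$ of length $p-1$; (iii) compute $d\Omega=0$ in this frame and show, using the structure equations $d\varphi^i=-\tfrac12\lambda_i(\varphi^0+\bar\varphi^0)\wedge\varphi^i$, that closedness relates the coefficient of each $\varphi^0\wedge\varphi^{K}\wedge\overline{\varphi^{K}}$-type term to $\big(\sum_{k\in K}\lambda_k - \text{something}\big)$ times a coefficient; (iv) conclude that the surviving positive coefficient from (ii) forces $\sum_{k\notin I}\lambda_k=0$, i.e.\ $\sum_{k\in K_I}\lambda_k=0$ for every such $K_I$ of length $p-1$; (v) apply Lemma~\ref{lemma: sum 0 on multiindices} (with $k=p-1$, valid since $1\le p-1\le n-1$ once $p\ge 2$, and the case $p=1$ being the K\"ahler case excluded separately via Hasegawa) together with Remark~\ref{rem: sum lambda_i vanishes} to get $\lambda_1=\dots=\lambda_n=0$, contradicting the standing assumption $\lambda_i\ne 0$.

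The main obstacle I anticipate is step (iii)–(iv): turning the single pointwise positivity inequality from transversality into an algebraic obstruction via $d\Omega=0$. Transversality only tells us one coefficient of the invariant $\Omega$ is positive, and I must make sure the closedness equations genuinely kill that coefficient unless the relevant $\lambda$-sum vanishes; this requires carefully organizing the wedge-product bookkeeping with the signs coming from $d(\varphi^j\wedge\bar\varphi^j)=-\lambda_j\varphi^0\wedge\varphi^j\wedge\bar\varphi^j+\lambda_j\varphi^j\wedge\bar\varphi^0\wedge\bar\varphi^j$. An alternative that may sidestep the closedness computation entirely: directly integrate. Writing $\mu=p-1$ and letting $K$ run over multiindices of length $\mu$, take the invariant form $\gamma_K=\varphi^0\wedge\varphi^{k_1}\wedge\bar\varphi^{k_1}\wedge\cdots\wedge\varphi^{k_\mu}\wedge\bar\varphi^{k_\mu}$ of bidegree $(\mu+1,\mu)$; its complementary $\overline{\gamma_{K}}$ has bidegree $(\mu,\mu+1)$, and one can try to show $\int_N \Omega\wedge\gamma_K\wedge\overline{\gamma_{K'}}$ is, on one hand, computable from \eqref{eq: phi is exact} to be a multiple of $\big(\sum\lambda\big)$ and, on the other, strictly positive by transversality for the diagonal terms $K=K'$ — forcing $\sum_{k\in K}\lambda_k=0$ directly. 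I would try this integral approach first as it avoids differentiating $\Omega$, and fall back on the coefficient-tracking argument (i)–(v) if the integral pairing does not isolate a single $K$ cleanly.
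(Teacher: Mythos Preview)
Your proposal contains a genuine dimensional error that derails the argument. You write that $\psi_I=\varphi^0\wedge\beta_I$ ``has degree $n-p+1$; for the transversality pairing I actually want a simple $(n-p)$-covector,'' and then abandon $\psi_I$. But $N$ has complex dimension $n+1$, not $n$: a $p$-K\"ahler form $\Omega\in A^{p,p}_{\R}(N)$ is tested against simple $((n+1)-p,0)$-covectors, i.e.\ covectors of holomorphic degree exactly $n-p+1$. So your first instinct was correct and $\psi_I=\varphi^0\wedge\varphi^{i_1}\wedge\cdots\wedge\varphi^{i_{n-p}}$ is precisely the right simple form. Conversely, your replacement $\beta_I$ has the wrong degree: $\Omega\wedge\sigma_{n-p}\,\beta_I\wedge\overline{\beta_I}$ is an $(n,n)$-form, not a top form on $N$, so it cannot be integrated over $N$ and is not a multiple of $\mathrm{Vol}$ as you claim.

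Once you keep $\psi_I$, the whole elaborate machinery of steps (i)--(v) (averaging to an invariant form, coefficient tracking, analyzing $d\Omega=0$) becomes unnecessary. By Lemma~\ref{lemma: sum 0 on multiindices} and the standing hypothesis $\lambda_i\neq 0$, there exists \emph{one} multiindex $I$ of length $n-p$ with $\sum_{j}\lambda_{i_j}\neq 0$; for this $I$, equation~\eqref{eq: phi is exact} shows that $\varphi^0\wedge\bar\varphi^0\wedge\varphi^{i_1}\wedge\bar\varphi^{i_1}\wedge\cdots\wedge\varphi^{i_{n-p}}\wedge\bar\varphi^{i_{n-p}}$ is exact, hence so is $\sigma_{n+1-p}\,\psi_I\wedge\overline{\psi_I}$ (which equals the former up to a nonzero constant). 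Transversality gives $\int_N \Omega\wedge\sigma_{n+1-p}\,\psi_I\wedge\overline{\psi_I}>0$, while Stokes and $d\Omega=0$ give that the same integral vanishes. This is exactly the paper's proof: it uses the lemma in the contrapositive (to find a single bad multiindex) rather than trying to force all sums to vanish, and it applies Stokes directly to the exact test form, with no need to symmetrize $\Omega$ or expand it in a frame.
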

\begin{proof}
Assume on the contrary that $N$ is $p$-K\"ahler, and let $\omega$ be a $p$-K\"ahler form. Since $N$ is not a torus, by Lemma \ref{lemma: sum 0 on multiindices} there is a multiindex $I = (1 \leq i_1 < \ldots < i_{n - p} \leq n)$ of length $k = n - p$ such that $\sum_{i_j \in I} \lambda_{i_j} \neq 0$. Let
\[\begin{array}{rl}
\varphi = & (\sqi)^{p^2} 2^{-p} \cdot \varphi^0 \wedge \varphi^{i_1} \wedge \ldots \wedge \varphi^{i_{n - p}} \wedge \bar{\varphi}^0 \wedge \bar{\varphi}^{i_1} \wedge \ldots \wedge \bar{\varphi}^{i_{n - p}} =\\
= & (-1)^{\frac{(n - p + 1)(n - p)}{2}} (\sqi)^{p^2} 2^{-p} \cdot \varphi^0 \wedge\bar{\varphi}^0 \wedge \varphi^{i_1} \wedge \bar{\varphi}^{i_1} \wedge \ldots \wedge \varphi^{i_{n - p}} \wedge \bar{\varphi}^{i_{n - p}} =\\
= & (-1)^{\frac{(n - p + 1)(n - p)}{2}} (\sqi)^{p^2} 2^{-p} \left( \sum_{j = 1}^{n - p} \lambda_{i_j} \right)^{-1} \cdot d(\varphi^0 \wedge \varphi^{i_1} \wedge \bar{\varphi}^{i_1} \wedge \ldots \wedge \varphi^{i_{n - p}} \wedge \bar{\varphi}^{i_{n - p}}),
\end{array}\]
which by \eqref{eq: phi is exact} is a non-trivial exact $(n - p + 1, n - p + 1)$-form. It follows on the one hand that
\[\int_N \omega \wedge \varphi > 0\]
as $\omega$ is a $p$-K\"ahler form, while on the other hand that
\[\int_N \omega \wedge \varphi = \int_N d(\omega \wedge \varphi^0 \wedge \varphi^{i_1} \wedge \bar{\varphi}^{i_1} \wedge \ldots \wedge \varphi^{i_{n - p}} \wedge \bar{\varphi}^{i_{n - p}}) = 0\]
as $\omega$ is closed.
\end{proof}

\subsection{Kodaira dimension}\label{sect: kodaira}
As we said, the manifold $N$ are not holomorphically parallelizable, nonetheless their canonical bundle is holomorphically trivial. This proves item \eqref{item: kodaira} in Theorem \ref{thm: main}.

\begin{prop}
We have $\omega_N \simeq \mathcal{O}_N$, in particular $\kod(N) = 0$.
\end{prop}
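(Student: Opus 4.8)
The strategy is to exhibit an explicit nowhere-vanishing holomorphic section of the canonical bundle $\omega_N = \Lambda^{n+1,0} N$, which trivializes it. The natural candidate is built from the global $(1,0)$-forms $\varphi^0, \varphi^1, \ldots, \varphi^n$ introduced in Section \ref{sect: construction}: set
\[\Phi = \varphi^0 \wedge \varphi^1 \wedge \cdots \wedge \varphi^n = dw \wedge e^{-\frac{1}{2}(\lambda_1+\cdots+\lambda_n)(w+\bar w)}\, dz_1 \wedge \cdots \wedge dz_n.\]
By Remark \ref{rem: sum lambda_i vanishes} we have $\sum_{i=1}^n \lambda_i = 0$, so the exponential factor is identically $1$ and $\Phi = dw \wedge dz_1 \wedge \cdots \wedge dz_n$ on $G_M$. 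This is the key point where the hypothesis $M \in \SL(n,\Z)$ enters.

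First I would check that $\Phi$ descends to $N$. Each $\varphi^i$ is $\Gamma_{P,\tau}$-invariant by construction, hence so is $\Phi$; therefore $\Phi$ defines a global $(n+1,0)$-form on $N$, and it is nowhere vanishing since $\{\varphi^0,\ldots,\varphi^n\}$ is a global frame of $(T^{1,0}N)^*$. Next I would verify that $\Phi$ is holomorphic, i.e.\ $\delbar \Phi = 0$. Using the structure equations of the Lemma in Section \ref{sect: construction}, $d\varphi^i = -\frac12\lambda_i(\varphi^0+\bar\varphi^0)\wedge\varphi^i$ and $d\varphi^0 = 0$, one computes
\[d\Phi = -\tfrac12\Bigl(\sum_{i=1}^n \lambda_i\Bigr)(\varphi^0 + \bar\varphi^0)\wedge \varphi^0\wedge\varphi^1\wedge\cdots\wedge\varphi^n = 0\]
again by Remark \ref{rem: sum lambda_i vanishes}, since the $\varphi^0\wedge\varphi^0$ terms vanish by antisymmetry and only the $\bar\varphi^0$ contribution survives before the eigenvalue sum kills it. In particular $\delbar\Phi = 0$, so $\Phi$ is a holomorphic section; alternatively one notes directly that on $G_M$, $\Phi = dw\wedge dz_1\wedge\cdots\wedge dz_n$ is visibly $\delbar$-closed in the standard complex coordinates. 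Hence $\Phi$ trivializes $\omega_N$, giving $\omega_N \simeq \mathcal{O}_N$.

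Finally, the statement about Kodaira dimension follows formally: since $\omega_N \simeq \mathcal{O}_N$ one has $h^0(N, \omega_N^{\otimes m}) = h^0(N, \mathcal{O}_N) = 1$ for all $m \geq 0$ (using that $N$ is compact and connected), so $\kod(N) = 0$. There is no real obstacle here; the only subtlety worth flagging is making sure that the exponential weights in $\Phi$ cancel, which is exactly where $\det M = 1$ (equivalently $\sum\lambda_i = 0$) is used — without that hypothesis $\Phi$ would not be invariant and the canonical bundle need not be trivial.
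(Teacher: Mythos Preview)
Your proof is correct and follows essentially the same approach as the paper: both exhibit $\Phi=\varphi^0\wedge\varphi^1\wedge\cdots\wedge\varphi^n$ as a nowhere-vanishing holomorphic trivialization of $\omega_N$, using the structure equations and Remark~\ref{rem: sum lambda_i vanishes} to see that $\delbar\Phi=0$. Your additional observation that in coordinates $\Phi=dw\wedge dz_1\wedge\cdots\wedge dz_n$ (after the exponential weights cancel) and your explicit justification of $\kod(N)=0$ are nice complements, but the argument is the same.
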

\begin{proof}
The smooth $(n + 1, 0)$-form $\varphi^0 \wedge \varphi^1 \wedge \ldots \wedge \varphi^n$ is a smooth trivialization of the complex vector bundle $\omega_N$ and it satisfies
\[\begin{array}{rl}
\delbar\left( \varphi^0 \wedge \varphi^1 \wedge \ldots \wedge \varphi^n \right) = & -\varphi^0 \wedge \delbar \left( \varphi^1 \wedge \ldots \wedge \varphi^n \right) =\\
= & -\varphi^0 \wedge \sum_{i = 1}^n (-1)^{i - 1} \varphi^1 \wedge \ldots \wedge \underbrace{\left( -\frac{1}{2} \lambda_i \bar{\varphi}^0 \wedge \varphi^i \right)}_{i\text{-th place}} \wedge \ldots \wedge \varphi^n =\\
= & \frac{1}{2} \left( \sum_{i = 1}^n \lambda_i \right) \varphi^0 \wedge \bar{\varphi}^0 \wedge \varphi^1 \wedge \ldots \wedge \varphi^n =\\
= & 0
\end{array}\]
by Remark \ref{rem: sum lambda_i vanishes}. This shows that $\omega_N$ is holomorphically trivial.
\end{proof}

In dimension $6$, the Lie algebra of $G$ appeared as $\mathfrak{g}_1$ in the list by Fino, Otal and Ugarte in \cite[Theorem 2.8]{FOU}, where a  classification of holomorphically trivial $6$-dimensional compact complex solvmanifolds is provided.
\subsection{Dolbeault cohomology}\label{sect: dolbeault}
In this section we compute the Dolbeault cohomology of the generalized Nakamura manifolds $N$ introduced above.

It is easy to see that our construction matches the requirements of \cite[Assumption 1.1]{Kasuya}. As a consequence, by \cite[Corollary 4.2]{Kasuya} the Dolbeault cohomology of $N$ can be computed by a suitable sub-differential bigraded algebra $B^{p, q}$ of the differential bigraded algebra
\[\Lambda^{p, q} N = \bigwedge^p (T^* N)^{1, 0} \otimes \bigwedge^q (T^* N)^{0, 1}.\]

In order to describe $B^{p, q}$, let us introduce some notations. Let $I = (i_1, \ldots, i_p)$ and $J = (j_1, \ldots, j_q)$ be multi-indices of length $p$ and $q$ respectively, namely
\[1 \leq i_1 < \ldots < i_p \leq n, \qquad 1 \leq j_1 < \ldots < j_q \leq n,\]
we will denote
\[\varphi^I = \varphi^{i_1} \wedge \ldots \wedge \varphi^{i_p}, \qquad \bar{\varphi}^J = \bar{\varphi}^{j_1} \wedge \ldots \wedge \bar{\varphi}^{j_q}\]
and
\[c_{IJ} = \sum_{\sigma = 1}^p \lambda_{i_\sigma} + \sum_{\tau = 1}^q \lambda_{j_\tau}.\]
With these notations, $B^{p, q}$ is generated by the subset of $(p, q)$-forms among those of the form
\begin{equation}\label{eq: generators of B^{p, q}}
\begin{array}{llll}
f_{IJ} \cdot \varphi^I \wedge \bar{\varphi}^J & \text{with $I$, $J$ multiindices such that} & |I| = p, & |J| = q;\\
f_{IJ} \cdot \varphi^0 \wedge \varphi^I \wedge \bar{\varphi}^J & \text{with $I$, $J$ multiindices such that} & |I| = p - 1, & |J| = q;\\
f_{IJ} \cdot \bar{\varphi}^0 \wedge \varphi^I \wedge \bar{\varphi}^J & \text{with $I$, $J$ multiindices such that} & |I| = p, & |J| = q - 1;\\
f_{IJ} \cdot \varphi^0 \wedge \bar{\varphi}^0 \wedge \varphi^I \wedge \bar{\varphi}^J & \text{with $I$, $J$ multiindices such that} & |I| = p - 1, & |J| = q - 1;
\end{array}
\end{equation}
corresponding to multiindices $I$, $J$ such that the function
\[f_{IJ}(w) = e^{-\frac{1}{2} c_{IJ} (w - \bar{w})}\]
satisfies $f_{IJ}(\gamma) = 1$ for every $\gamma \in \Gamma'$.

It is an immediate computation that
\begin{equation}\label{eq: delbar f_IJ}
\delbar f_{IJ} = \frac{1}{2} c_{IJ} f_{IJ} \bar{\varphi}^0.
\end{equation}

Recall that $\Gamma'_\tau = \Z \oplus \Z \cdot \sqi \tau$ for $\tau \in \R \smallsetminus \{ 0 \}$. We can characterize the generators of $B^{p, q}$ in terms of the parameter $\tau$ defining the lattice $\Gamma'$ as follows.
\begin{lemma}
$B^{p, q}$ is generated by the $(p, q)$-forms in \eqref{eq: generators of B^{p, q}} corresponding to multiindices $I$, $J$ such that
\[\tau \cdot c_{IJ} \in 2\pi \cdot \Z\]
\end{lemma}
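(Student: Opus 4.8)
The plan is to unwind the defining condition on the generators of $B^{p,q}$, which says that $f_{IJ}(\gamma)=1$ for every $\gamma\in\Gamma'_\tau$, and to translate it into the stated arithmetic constraint $\tau\cdot c_{IJ}\in 2\pi\Z$. Since $\Gamma'_\tau=\Z\oplus\sqi\tau\cdot\Z$ is generated as a group by $1$ and $\sqi\tau$, and $f_{IJ}\colon\C\to\C^\ast$ is multiplicative in the sense that $f_{IJ}(w_1+w_2)=f_{IJ}(w_1)f_{IJ}(w_2)$ (which is immediate from the formula $f_{IJ}(w)=e^{-\frac12 c_{IJ}(w-\bar w)}$, because $w\mapsto w-\bar w$ is additive), it suffices to check the condition $f_{IJ}(\gamma)=1$ on these two generators.

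First I would evaluate $f_{IJ}$ at $\gamma=1$: here $w-\bar w=0$, so $f_{IJ}(1)=e^0=1$ automatically, with no constraint on $c_{IJ}$. Next I would evaluate at $\gamma=\sqi\tau$: now $w-\bar w=\sqi\tau-(-\sqi\tau)=2\sqi\tau$, so
\[
f_{IJ}(\sqi\tau)=e^{-\frac12 c_{IJ}\cdot 2\sqi\tau}=e^{-\sqi\,\tau c_{IJ}}.
\]
Recalling that $c_{IJ}\in\R$ (it is a real-linear combination of the real numbers $\lambda_i$) and $\tau\in\R$, this equals $1$ if and only if $\tau c_{IJ}\in 2\pi\Z$. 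Finally I would note that since every element of $\Gamma'_\tau$ is a $\Z$-combination of $1$ and $\sqi\tau$, the multiplicativity of $f_{IJ}$ shows that $f_{IJ}(\gamma)=1$ for all $\gamma\in\Gamma'_\tau$ as soon as it holds for these two generators, which completes the characterization.

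There is no real obstacle here; the only point requiring a line of care is the multiplicativity of $f_{IJ}$ under the group law of $\Gamma'_\tau$ — but since $\Gamma'_\tau$ sits inside the \emph{additive} group $\C$ (the semidirect product structure only twists the $\C^n$-factor, not the base $\C$), the relevant operation is ordinary addition of complex numbers and $w\mapsto w-\bar w=2\sqi\im(w)$ is clearly additive, so $f_{IJ}$ is a genuine group homomorphism $(\C,+)\to(\C^\ast,\cdot)$. One might also remark, as a sanity check consistent with Remark~\ref{rem: sum lambda_i vanishes}, that for $I=J=\varnothing$ (or more generally whenever $c_{IJ}=0$) the condition is vacuously satisfied, so constant functions always appear among the admissible $f_{IJ}$, as they must.
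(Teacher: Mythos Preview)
Your proof is correct and follows essentially the same approach as the paper: reduce the condition $f_{IJ}(\gamma)=1$ for all $\gamma\in\Gamma'_\tau$ to the two generators $1$ and $\sqi\tau$, observe that $f_{IJ}(1)=1$ is automatic, and compute $f_{IJ}(\sqi\tau)=e^{-\sqi\tau c_{IJ}}$. Your version is simply more explicit about the multiplicativity of $f_{IJ}$ that justifies checking only on generators.
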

\begin{proof}
Let $I$, $J$ be a pair of multiindices, then $f_{IJ}(\gamma) = 1$ for every $\gamma \in \Gamma'_\tau$ if and only if $f_{IJ}(\sqi \tau) = 1$. By definition this means that $e^{-\sqi \tau c_{IJ}} = 1$ and the lemma follows.
\end{proof}

\begin{prop}\label{prop: delbar trivial}
We have $\delbar (f_{IJ} \cdot \varphi^I \wedge \bar{\varphi}^J) = 0$. In particular, this implies that $\delbar \equiv 0$ on $B^{p, q}.$
\end{prop}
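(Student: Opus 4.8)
The plan is to reduce the statement to a single application of the Leibniz rule, after recording the structure equations of the previous Lemma in the $\delbar$-direction. From $d\varphi^0 = 0$ we read $\delbar\varphi^0 = \delbar\bar\varphi^0 = 0$; extracting the bidegree $(1,1)$-component of $d\varphi^i = -\tfrac12\lambda_i(\varphi^0 + \bar\varphi^0)\wedge\varphi^i$ and the bidegree $(0,2)$-component of its conjugate $d\bar\varphi^i$ yields
\[\delbar\varphi^i = -\tfrac12\lambda_i\,\bar\varphi^0\wedge\varphi^i, \qquad \delbar\bar\varphi^i = -\tfrac12\lambda_i\,\bar\varphi^0\wedge\bar\varphi^i, \qquad i = 1,\dots,n.\]

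The core step is the following identity, valid for \emph{arbitrary} multiindices $I = (i_1 < \dots < i_p)$ and $J = (j_1 < \dots < j_q)$ in $\{1,\dots,n\}$ of any lengths, with $c_{IJ} = \sum_\sigma \lambda_{i_\sigma} + \sum_\tau \lambda_{j_\tau}$ as in the text:
\[\delbar(\varphi^I \wedge \bar\varphi^J) = -\tfrac12\,c_{IJ}\;\bar\varphi^0 \wedge \varphi^I \wedge \bar\varphi^J.\]
I would prove it by expanding $\delbar$ factor by factor: each factor $\varphi^{i_\sigma}$ contributes, via $\delbar\varphi^{i_\sigma} = -\tfrac12\lambda_{i_\sigma}\bar\varphi^0\wedge\varphi^{i_\sigma}$, a term carrying the antiderivation sign $(-1)^{\sigma - 1}$, and sliding the new $\bar\varphi^0$ to the front past the first $\sigma - 1$ one-forms costs another $(-1)^{\sigma - 1}$, so the two signs cancel and the term is $-\tfrac12\lambda_{i_\sigma}\bar\varphi^0\wedge\varphi^I\wedge\bar\varphi^J$; the $\bar\varphi^J$-factors behave identically, the Koszul sign $(-1)^{|I|}$ produced when $\delbar$ crosses $\varphi^I$ being cancelled when $\bar\varphi^0$ is moved back in front of $\varphi^I$. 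Summing gives the claimed identity. Combining it with \eqref{eq: delbar f_IJ} and the fact that $f_{IJ}$ is a function,
\begin{align*}
\delbar(f_{IJ}\,\varphi^I\wedge\bar\varphi^J) &= (\delbar f_{IJ})\wedge\varphi^I\wedge\bar\varphi^J + f_{IJ}\,\delbar(\varphi^I\wedge\bar\varphi^J)\\
&= \tfrac12 c_{IJ} f_{IJ}\,\bar\varphi^0\wedge\varphi^I\wedge\bar\varphi^J - \tfrac12 c_{IJ} f_{IJ}\,\bar\varphi^0\wedge\varphi^I\wedge\bar\varphi^J = 0,
\end{align*}
which is the asserted vanishing; since $I$, $J$ are arbitrary, this already handles the first family of generators in \eqref{eq: generators of B^{p, q}}.

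For the remaining three families I would exploit that $\varphi^0$ and $\bar\varphi^0$ are $\delbar$-closed: up to sign each such generator is $\varphi^0\wedge(f_{IJ}\varphi^I\wedge\bar\varphi^J)$, $\bar\varphi^0\wedge(f_{IJ}\varphi^I\wedge\bar\varphi^J)$, or $\varphi^0\wedge\bar\varphi^0\wedge(f_{IJ}\varphi^I\wedge\bar\varphi^J)$ for the appropriate (shorter) multiindices, so the Leibniz rule together with the core step makes $\delbar$ of it vanish. As $B^{p,q}$ is the $\C$-linear span of the forms in \eqref{eq: generators of B^{p, q}} and $\delbar$ is $\C$-linear, it follows that $\delbar \equiv 0$ on $B^{p,q}$. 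The only mildly delicate point is the sign bookkeeping in the core identity, and it is harmless precisely because every Koszul sign there occurs an even number of times.
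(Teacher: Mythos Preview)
Your proof is correct and follows essentially the same approach as the paper: both expand $\delbar(f_{IJ}\,\varphi^I\wedge\bar\varphi^J)$ via the Leibniz rule, compute $\delbar(\varphi^I\wedge\bar\varphi^J)$ factor by factor using the structure equations, and observe the cancellation against $\delbar f_{IJ}$, then dispatch the remaining three generator types by wedging with the $\delbar$-closed forms $\varphi^0$, $\bar\varphi^0$. The only difference is organizational---you first isolate the identity $\delbar(\varphi^I\wedge\bar\varphi^J) = -\tfrac12 c_{IJ}\,\bar\varphi^0\wedge\varphi^I\wedge\bar\varphi^J$ before combining it with \eqref{eq: delbar f_IJ}, whereas the paper carries out the full expansion in one display---but the computation and the sign bookkeeping are identical.
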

\begin{proof}
Using \eqref{eq: delbar f_IJ} we see that
\[\begin{array}{rl}
\delbar (f_{IJ} \cdot \varphi^I \wedge \bar{\varphi}^J) = & \frac{1}{2} c_{IJ} f_{IJ} \bar{\varphi}^0 \wedge \varphi^I \wedge \bar{\varphi}^J + f_{IJ} \delbar(\varphi^I \wedge \bar{\varphi}^J) =\\
= & \frac{1}{2} c_{IJ} f_{IJ} \bar{\varphi}^0 \wedge \varphi^I \wedge \bar{\varphi}^J + \\
 & + f_{IJ} \left( \sum_{\sigma = 1}^p (-1)^{\sigma - 1} \varphi^{i_1} \wedge \ldots \wedge \left( -\frac{1}{2} \lambda_{i_\sigma} \bar{\varphi}^0 \wedge \varphi^{i_\sigma} \right) \wedge \ldots \wedge \varphi^{i_p} \wedge \bar{\varphi}^J + \right.\\
 & \left. + (-1)^p \varphi^I \wedge \sum_{\tau = 1}^q (-1)^{\tau - 1} \bar{\varphi}^{j_1} \wedge \ldots \wedge \left( -\frac{1}{2} \lambda_{j_\tau} \bar{\varphi}^0 \wedge \bar{\varphi}^{j_\tau} \right) \wedge \ldots \wedge \bar{\varphi}^{j_q} \right) =\\
= & \frac{1}{2} c_{IJ} f_{IJ} \bar{\varphi}^0 \wedge \varphi^I \wedge \bar{\varphi}^J +\\
 & + f_{IJ} \sum_{\sigma = 1}^p \left( -\frac{1}{2} \lambda_{i_\sigma} \right) \bar{\varphi}^0 \wedge \varphi^I \wedge \bar{\varphi}^J +\\
 & + f_{IJ} \sum_{\tau = 1}^q \left( -\frac{1}{2} \lambda_{j_\tau} \right) \bar{\varphi}^0 \wedge \varphi^I \wedge \bar{\varphi}^J =\\
= & 0.
\end{array}\]
As $\delbar \varphi^0 = \delbar \bar{\varphi}^0 = 0$, this implies that
\[\begin{array}{l}
\delbar(f_{IJ} \cdot \varphi^0 \wedge \varphi^I \wedge \bar{\varphi}^J) = -\varphi^0 \wedge \delbar(f_{IJ} \cdot \varphi^I \wedge \bar{\varphi}^J) = 0,\\
\delbar(f_{IJ} \cdot \bar{\varphi}^0 \wedge \varphi^I \wedge \bar{\varphi}^J) = -\bar{\varphi}^0 \wedge \delbar(f_{IJ} \cdot \varphi^I \wedge \bar{\varphi}^J) = 0,\\
\delbar(f_{IJ} \cdot \varphi^0 \wedge \bar{\varphi}^0 \wedge \varphi^I \wedge \bar{\varphi}^J) = \varphi^0 \wedge \bar{\varphi}^0 \wedge \delbar(f_{IJ} \cdot \varphi^I \wedge \bar{\varphi}^J) = 0,
\end{array}\]
hence the proposition.
\end{proof}

\begin{theorem}\label{thm: dolbeault cohomology of N}
Let $N$ be a generalized Nakamura manifold, then
\[H^{p, q}_{\delbar}(N) \simeq B^{p, q}.\]
\end{theorem}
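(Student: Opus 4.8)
The plan is to invoke Kasuya's theorem in the form already quoted (\cite[Corollary 4.2]{Kasuya}), which asserts that the inclusion of the sub-differential bigraded algebra $B^{\bullet,\bullet} \hookrightarrow \Lambda^{\bullet,\bullet} N$ is a quasi-isomorphism with respect to $\delbar$, so that $H^{p,q}_{\delbar}(N) \simeq H^{p,q}(B^{\bullet,\bullet},\delbar)$. The content of the theorem is then entirely to identify the right-hand side with $B^{p,q}$ itself, i.e.\ to show that the $\delbar$-cohomology of the complex $(B^{\bullet,\bullet},\delbar)$ is the whole space $B^{p,q}$ in each bidegree. This is exactly what Proposition~\ref{prop: delbar trivial} gives us: it says $\delbar \equiv 0$ on $B^{p,q}$. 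Hence the cohomology of $(B^{\bullet,\bullet},\delbar)$ in bidegree $(p,q)$ is $\Ker(\delbar \colon B^{p,q} \to B^{p,q+1}) / \im(\delbar \colon B^{p,q-1} \to B^{p,q})$, and since both the map out of $B^{p,q}$ and the map into $B^{p,q}$ are zero, this quotient is simply $B^{p,q}$.

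The steps, in order, are: first, verify that the data $(M,P,\tau)$ and the resulting presentation $G_M = \C \ltimes_\rho \C^n$, $\Gamma_{P,\tau} = \Gamma'_\tau \ltimes_\rho \Gamma''_P$ satisfy \cite[Assumption 1.1]{Kasuya} — the excerpt already notes ``it is easy to see that our construction matches the requirements'', so this amounts to checking the algebraic-group-theoretic hypotheses (the algebraic hull condition on the representation) and citing them. Second, apply \cite[Corollary 4.2]{Kasuya} to get $H^{p,q}_{\delbar}(N) \simeq H^{p,q}(B^{\bullet,\bullet},\delbar)$, using the explicit description of the generators of $B^{p,q}$ in \eqref{eq: generators of B^{p, q}} that was worked out above. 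Third, invoke Proposition~\ref{prop: delbar trivial} to conclude $\delbar$ vanishes identically on $B^{\bullet,\bullet}$, so that $H^{p,q}(B^{\bullet,\bullet},\delbar) = B^{p,q}$, yielding the claimed isomorphism $H^{p,q}_{\delbar}(N) \simeq B^{p,q}$.

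The only genuinely substantive point — and the one I would expect to be the main obstacle — is the verification that Kasuya's machinery applies, i.e.\ confirming that the completely solvable Lie group $G_M$ with its lattice $\Gamma_{P,\tau}$ meets the hypotheses under which $B^{\bullet,\bullet}$ computes Dolbeault cohomology; this requires unwinding the definition of the relevant algebraic hull / Zariski closure of $\rho(\Gamma'_\tau)$ and checking that the forms $f_{IJ}\,\varphi^I\wedge\bar\varphi^J$ selected by the condition $\tau c_{IJ}\in 2\pi\Z$ are precisely the ones surviving in Kasuya's finite-dimensional model. Once that structural input is in place, everything else is formal: the vanishing of $\delbar$ on $B^{\bullet,\bullet}$ established in Proposition~\ref{prop: delbar trivial} does all the remaining work, and no further computation is needed. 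I would therefore keep the proof short, essentially a two-line deduction: ``$B^{p,q}$ computes $H^{p,q}_{\delbar}(N)$ by \cite[Corollary 4.2]{Kasuya} since \cite[Assumption 1.1]{Kasuya} holds; by Proposition~\ref{prop: delbar trivial} the differential $\delbar$ is identically zero on $B^{\bullet,\bullet}$, whence $H^{p,q}_{\delbar}(N)\simeq B^{p,q}$.''
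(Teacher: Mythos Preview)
Your proposal is correct and matches the paper's proof essentially line for line: invoke \cite[Corollary~4.2]{Kasuya} to get that $B^{\bullet,\bullet}\hookrightarrow \Lambda^{\bullet,\bullet}N$ is a $\delbar$-quasi-isomorphism, then use Proposition~\ref{prop: delbar trivial} to conclude the differential on $B^{\bullet,\bullet}$ is zero, whence $H^{p,q}_{\delbar}(N)\simeq B^{p,q}$. The only difference is emphasis: you flag the verification of \cite[Assumption~1.1]{Kasuya} as the substantive step, whereas the paper treats that check as already done in the paragraph preceding the definition of $B^{p,q}$ and keeps the proof itself to two sentences.
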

\begin{proof}
By \cite[Corollary 4.2]{Kasuya}, the inclusion of differential bigraded algebras $B^{*, *} \subseteq \Lambda^{*, *} N$ is a quasi-isomorphism (with respect to $\delbar$). By Proposition \ref{prop: delbar trivial} the differential $\delbar$ is trivial on $B^{*, *}$ hence
\[H^{p, q}_{\delbar}(N) = H^q((\Lambda^{p, *} N, \delbar)) \simeq H^q((B^{p, *}, 0)) = B^{p, q}.\]
\end{proof}

\begin{rem}
By our assumption that none of the $\lambda_i$'s vanish we deduce that $\{ I, J \text{ such that } c_{IJ} \neq 0 \} \neq \varnothing$, so we can choose $\tau \in \R \smallsetminus \{ 0 \}$ with
\begin{equation}\label{eq: condition}
|\tau| < \frac{2\pi}{M}, \qquad M = \max_{I, J} \{|c_{IJ}| \text{ such that } c_{IJ} \neq 0 \}.
\end{equation}
With this choice of $\tau$, let $I$, $J$ be multiindices such that $\tau \cdot c_{IJ} \in 2\pi \cdot \Z$, say $\tau \cdot c_{IJ} = 2\pi l$, and assume that $c_{IJ} \neq 0$. Then $l \neq 0$ and
\[|\tau| = \frac{2\pi}{|c_{IJ}|} |l| \geq \frac{2\pi}{M} |l| \geq \frac{2\pi}{M} > |\tau|,\]
which is an absurd. This shows that it is possible to choose $\tau \in \R \smallsetminus \{ 0 \}$ satisfying the following assumption: \begin{equation}\label{eq: technical condition}
\begin{array}{c}
\text{for every $I$, $J$ it holds that}\\
\tau \cdot c_{IJ} \in 2\pi \cdot \Z \Longleftrightarrow c_{IJ} = 0.
\end{array}
\end{equation}
\end{rem}

\begin{prop}\label{prop: conjugation}
Let $N = N_{M, P, \tau}$ be a Nakamura manifold corresponding to a choice of $\tau$ satisfying \eqref{eq: technical condition}. Then the Dolbeault cohomology of $N$ is generated by the classes of some invariant forms and complex conjugation defines an isomorphism $\overline{H^{p, q}_{\delbar}(N)} \simeq H^{q, p}_{\delbar}(N)$.
\end{prop}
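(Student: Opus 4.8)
The plan is to combine the explicit description of the Dolbeault cohomology from Theorem~\ref{thm: dolbeault cohomology of N} with the technical condition \eqref{eq: technical condition}. First I would recall that, by Theorem~\ref{thm: dolbeault cohomology of N}, $H^{p,q}_{\delbar}(N) \simeq B^{p,q}$, and that $B^{p,q}$ is spanned by the forms listed in \eqref{eq: generators of B^{p, q}} corresponding to multiindices $I$, $J$ for which $\tau \cdot c_{IJ} \in 2\pi \cdot \Z$. Under the hypothesis \eqref{eq: technical condition}, this selection condition simplifies dramatically: $\tau \cdot c_{IJ} \in 2\pi \cdot \Z$ holds if and only if $c_{IJ} = 0$. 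Hence the surviving generators are precisely those associated to multiindices with $c_{IJ} = 0$, and for such indices the function $f_{IJ}(w) = e^{-\frac{1}{2} c_{IJ}(w - \bar w)}$ is identically $1$. Therefore every generator of $B^{p,q}$ is one of the constant-coefficient forms $\varphi^I \wedge \bar\varphi^J$, $\varphi^0 \wedge \varphi^I \wedge \bar\varphi^J$, $\bar\varphi^0 \wedge \varphi^I \wedge \bar\varphi^J$, or $\varphi^0 \wedge \bar\varphi^0 \wedge \varphi^I \wedge \bar\varphi^J$ with $c_{IJ} = 0$; these are invariant forms on $N$. This proves the first assertion.

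For the isomorphism $\overline{H^{p,q}_{\delbar}(N)} \simeq H^{q,p}_{\delbar}(N)$, I would argue that complex conjugation maps the generating set of $B^{p,q}$ bijectively onto the generating set of $B^{q,p}$. Concretely, the conjugate of $\varphi^I \wedge \bar\varphi^J$ (up to a sign from reordering) is $\varphi^J \wedge \bar\varphi^I$, and the conjugation swaps the roles of the $\varphi^0$ and $\bar\varphi^0$ factors in the other three families, so the four families in \eqref{eq: generators of B^{p, q}} for bidegree $(p,q)$ are permuted among themselves (the first and fourth are sent to themselves, the second and third to each other) and sent to the corresponding families for bidegree $(q,p)$. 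Crucially, the coefficient $c_{IJ}$ is symmetric: $c_{IJ} = c_{JI}$ since it is just the sum $\sum_\sigma \lambda_{i_\sigma} + \sum_\tau \lambda_{j_\tau}$ (all $\lambda$'s real), so the selection condition $c_{IJ} = 0$ is preserved under the swap $I \leftrightarrow J$. Because $\delbar$ vanishes identically on $B^{*,*}$ by Proposition~\ref{prop: delbar trivial}, and conjugation takes $B^{p,q}$ isomorphically (as a real-conjugate-linear map) onto $B^{q,p}$, it descends to the desired \iso\ on cohomology. This is where I would need to be slightly careful: one must check that conjugation indeed preserves $B^{*,*}$ as a subspace of $\Lambda^{*,*}N$, but this follows from the symmetry of $c_{IJ}$ together with the constancy of the $f_{IJ}$'s on the surviving generators.

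The main (and essentially only) obstacle is bookkeeping: verifying that complex conjugation genuinely restricts to an isomorphism $B^{p,q} \to \overline{B^{q,p}}$ rather than merely landing in the ambient space $\Lambda^{q,p}N$, i.e.\ that no spurious non-invariant generators appear and that the four families are correctly matched under conjugation. Once \eqref{eq: technical condition} has collapsed the coefficient functions to constants, this is a finite, transparent check and the symmetry $c_{IJ} = c_{JI}$ does all the work; the absence of any nontrivial differential on $B^{*,*}$ means no further argument on representatives or quasi-isomorphisms is needed beyond what Theorem~\ref{thm: dolbeault cohomology of N} already provides.
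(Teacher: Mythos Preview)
Your proposal is correct and follows essentially the same approach as the paper's proof: both use the technical condition \eqref{eq: technical condition} to force $f_{IJ}\equiv 1$ so that the generators of $B^{p,q}$ are invariant, and then check that complex conjugation carries the generators of $B^{p,q}$ to those of $B^{q,p}$ via the symmetry $c_{IJ}=c_{JI}$. Your write-up is in fact slightly more explicit than the paper's in tracking how the four families in \eqref{eq: generators of B^{p, q}} are permuted under conjugation.
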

\begin{proof}
Observe that in this case we have $f_{IJ} \equiv 1$, hence it follows from \eqref{eq: generators of B^{p, q}} and Theorem \ref{thm: dolbeault cohomology of N} that $H^{p, q}_{\delbar}(N)$ is generated by invariant classes.

Assume now that $\varphi^I \wedge \bar{\varphi}^J$ is a $(p, q)$-form corresponding to $I, J$ such that $c_{IJ} = 0$. Then
\[\overline{\varphi^I \wedge \bar{\varphi}^J} = (-1)^{pq} \varphi^J \wedge \bar{\varphi}^J\]
is a $(q, p)$-form which satisfies $c_{JI} = 0$, hence it is one of the distinguished generators for $H^{q, p}_{\delbar}(N)$. A similar result holds also for the other types of generators in \eqref{eq: generators of B^{p, q}}, hence complex conjugation is well defined on $\bigoplus_{p + q = k} H^{p, q}_{\delbar}(N)$ and sends $H^{p, q}_{\delbar}(N)$ to $H^{q, p}_{\delbar}(N)$.
\end{proof}

\begin{prop}\label{prop: Dolbeault numbers of N}
Let $N = N_{M, P, \tau}$ be a Nakamura manifold corresponding to a choice of $\tau$ satisfying \eqref{eq: technical condition}. Then
\[h^{p, q}_{\delbar}(N) = \# \left\{ (I, J) \text{ such that } c_{IJ} = 0 \text{ with } \begin{array}{lll}
|I| = p, & |J| = q & \text{or}\\
|I| = p - 1, & |J| = q & \text{or}\\
|I| = p, & |J| = q - 1 & \text{or}\\
|I| = p - 1, & |J| = q - 1
\end{array}\right\}.\]
\end{prop}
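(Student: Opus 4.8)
The plan is to count, for fixed $(p,q)$, the number of distinguished generators of $B^{p,q}$ described in Proposition~\ref{prop: conjugation}. By Theorem~\ref{thm: dolbeault cohomology of N} we have $H^{p,q}_{\delbar}(N)\simeq B^{p,q}$, and since $\tau$ satisfies \eqref{eq: technical condition} the relevant multiindices are exactly those with $c_{IJ}=0$ (the functions $f_{IJ}$ are then identically $1$, so they genuinely occur among the generators). So $h^{p,q}_{\delbar}(N)=\dim_{\C} B^{p,q}$ equals the number of forms in the list \eqref{eq: generators of B^{p, q}} whose coefficient multiindices satisfy $c_{IJ}=0$.

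The main point is therefore to verify that the four families in \eqref{eq: generators of B^{p, q}} are linearly independent, i.e.\ that no relations occur beyond the obvious one-to-one correspondence with pairs $(I,J)$. This is clear: the monomials $\varphi^I\wedge\bar\varphi^J$, $\varphi^0\wedge\varphi^I\wedge\bar\varphi^J$, $\bar\varphi^0\wedge\varphi^I\wedge\bar\varphi^J$ and $\varphi^0\wedge\bar\varphi^0\wedge\varphi^I\wedge\bar\varphi^J$ (with $I,J$ ranging over multiindices in $\{1,\dots,n\}$ of the appropriate lengths) form part of the standard monomial basis of $\Lambda^{p,q}N$ coming from the coframe $\{\varphi^0,\dots,\varphi^n\}$, hence are linearly independent over $\C$; multiplying the $I,J$-th of them by the nowhere-zero function $f_{IJ}$ does not change this. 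Consequently $\dim_{\C}B^{p,q}$ is precisely the number of pairs $(I,J)$ contributing a $(p,q)$-form with $c_{IJ}=0$, and one reads off from \eqref{eq: generators of B^{p, q}} that the four contributing shapes correspond to $(|I|,|J|)$ equal to $(p,q)$, $(p-1,q)$, $(p,q-1)$ and $(p-1,q-1)$ respectively, which is exactly the asserted count.

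I do not expect any serious obstacle here; the only thing to be a little careful about is bookkeeping of which shapes are possible for a given $(p,q)$ (e.g.\ for $p=0$ only the first and third shapes can occur, since $|I|=p-1$ is then impossible, and similarly at the boundary $p=n+1$ or $q=n+1$), but this is automatically handled by interpreting the count as ranging over multiindices of the stated length in $\{1,\dots,n\}$, with an empty contribution whenever the length is negative or exceeds $n$. Thus the proof is: apply Theorem~\ref{thm: dolbeault cohomology of N} and \eqref{eq: technical condition} to identify $H^{p,q}_{\delbar}(N)$ with the span of the generators in \eqref{eq: generators of B^{p, q}} having $c_{IJ}=0$, note these generators are linearly independent, and count them.
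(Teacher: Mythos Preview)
Your argument is correct and is exactly the reasoning the paper has in mind: in the paper this proposition is stated without proof, as an immediate consequence of Theorem~\ref{thm: dolbeault cohomology of N} together with the description \eqref{eq: generators of B^{p, q}} of $B^{p,q}$ and the assumption \eqref{eq: technical condition}. Your explicit verification of linear independence and the boundary bookkeeping simply spell out what the paper leaves implicit.
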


\subsection{de Rham cohomology}\label{sect: de Rham}
Write $w = u + \sqi v$, $z_i = x_i + \sqi y_i$ (for $i = 1, \ldots, n$) and consider the (real) $1$-forms $\eta^0, \psi^0, \eta^1, \ldots, \eta^n, \psi^1, \ldots, \psi^n$ defined by
\[\varphi^0 = \eta^0 + \sqi \psi^0, \qquad \varphi^i = \eta^i + \sqi \psi^i \qquad i = 1, \ldots, n.\]
Let $e_0, f_0, e_1, \ldots, e_n, f_1, \ldots, f_n$ be the dual vector fields of $\eta^0, \psi^0, \eta^1, \ldots, \eta^n, \psi^1, \ldots, \psi^n$, then the only non-trivial commutators of these vector fields are
\[[e_0, e_i] = \lambda_i e_i, \qquad [e_0, f_i] = \lambda_i f_i \qquad i = 1, \ldots, n.\]
We will still call $e_0, f_0, e_1, \ldots, e_n, f_1, \ldots, f_n$ the corresponding basis for the Lie algebra $\frg$ of the Lie group $G$, then we observe the following facts.

\begin{enumerate}
\item $[\frg, \frg] = \Span\{ e_1, \ldots, e_n, f_1, \ldots, f_n \}$.
\item $[\frg, [\frg, \frg]] = \Span\{ e_1, \ldots, e_n, f_1, \ldots, f_n \} = [\frg, \frg]$ and so $\frg$ can not be nilpotent.
\item $[[\frg, \frg], [\frg, \frg]] = 0$ and so $\frg$ is solvable.
\item Let $v = \mu_0 e_0 + \nu_0 f_0 + \mu_1 e_1 + \ldots + \mu_n e_n + \nu_1 f_1 + \ldots + \nu_n f_n \in \frg$, then the map $ad(v) = [v, \cdot]$ is represented by the matrix:
\[\left( \begin{array}{cc|ccc|ccc}
0 & 0 & 0 & \cdots & 0 & 0 & \cdots & 0\\
0 & 0 & 0 & \cdots & 0 & 0 & \cdots & 0\\
\hline
-\lambda_1 \mu_1 & 0 & \lambda_1 \mu_0 & & & 0 & \cdots & 0\\
\vdots & \vdots & & \ddots & & \vdots & \ddots & \vdots\\
-\lambda_n \mu_n & 0 & & & \lambda_n \mu_0 & 0 & \cdots & 0\\
\hline
-\lambda_1 \nu_1 & 0 & 0 & \cdots & 0 & \lambda_1 \nu_0 & & \\
\vdots & \vdots & \vdots & \ddots & \vdots & & \ddots & \\
-\lambda_n \nu_n & 0 & 0 & \cdots & 0& & & \lambda_n \nu_0
\end{array} \right),\]
whose eigenvalues are all real (observw that it is a lower triangular matrix). As a consequence, $\frg$ is completely solvable and so the de Rham cohomology of a generalized Nakamura manifold $N$ coincides with the Chevalley--Eilenberg cohomology of $\frg$ by \cite[Corollary 4.2]{Hattori}.
\end{enumerate}

\begin{prop}\label{prop: ker d}
Let $N = N_{M, P, \tau}$ be a Nakamura manifold corresponding to a choice of $\tau$ satisfying \eqref{eq: technical condition}. Then
\[\ker \left( d: \Lambda^k T^*_{\C}N \longrightarrow \Lambda^{k + 1} T^*_{\C}N \right) = \bigoplus_{p + q = k} B^{p, q} \oplus \operatorname{im} \left( d: \Lambda^{k - 1} T^*_{\C}N \longrightarrow \Lambda^k T^*_{\C}N \right).\]
\end{prop}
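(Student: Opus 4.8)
The plan is to pass to the invariant (Chevalley--Eilenberg) complex, where everything can be computed by hand, and then lift the conclusion to all smooth forms via Hattori's theorem. Since $\frg$ is completely solvable, \cite[Corollary 4.2]{Hattori} says that the inclusion of left-invariant forms $(\Lambda^\bullet\frg^*_\C, d) \hookrightarrow (\Lambda^\bullet T^*_\C N, d)$ is a quasi-isomorphism. So I would first establish the invariant version of the identity, namely $\ker(d|_{\Lambda^k\frg^*_\C}) = \bigl(\bigoplus_{p+q=k}B^{p,q}\bigr) \oplus \operatorname{im}(d|_{\Lambda^{k-1}\frg^*_\C})$, and then deduce the statement for all smooth forms formally: a closed smooth $k$-form is $d$-cohomologous to an invariant one, hence -- by the invariant identity -- to an element of $\bigoplus_{p+q=k}B^{p,q}$, which gives $\ker d \subseteq \bigoplus_{p+q=k}B^{p,q} + \operatorname{im}d$; the reverse inclusion holds because every generator of $B^{*,*}$ is $d$-closed; and the sum is direct since an element of $\bigoplus_{p+q=k}B^{p,q}$ that is smoothly $d$-exact has trivial class in $H^k_{dR}(N) \cong H^k_{\mathrm{CE}}(\frg)$, hence lies in $\operatorname{im}(d|_{\Lambda^{k-1}\frg^*_\C})$ and therefore vanishes by directness in the invariant complex.

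To prove the invariant identity I would split $(\Lambda^\bullet\frg^*_\C, d)$ into a direct sum of four-dimensional subcomplexes. Because $d\varphi^0 = d\bar\varphi^0 = 0$ and $d\varphi^i = -\tfrac12\lambda_i(\varphi^0+\bar\varphi^0)\wedge\varphi^i$ (and the conjugate relation for $\bar\varphi^i$), the differential of a monomial never changes which of $\varphi^1, \dots, \varphi^n, \bar\varphi^1, \dots, \bar\varphi^n$ occur in it; it only wedges in a factor $\varphi^0$ or $\bar\varphi^0$. Hence $\Lambda^\bullet\frg^*_\C = \bigoplus_{A,B} C_{A,B}$, where $A, B$ run over subsets of $\{1,\dots,n\}$ and $C_{A,B}$ is spanned by $\varphi^A\wedge\bar\varphi^B$, $\varphi^0\wedge\varphi^A\wedge\bar\varphi^B$, $\bar\varphi^0\wedge\varphi^A\wedge\bar\varphi^B$ and $\varphi^0\wedge\bar\varphi^0\wedge\varphi^A\wedge\bar\varphi^B$. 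Writing $c_{AB} = \sum_{i\in A}\lambda_i + \sum_{j\in B}\lambda_j$, a direct computation (like \eqref{eq: phi is exact}) gives $d(\varphi^A\wedge\bar\varphi^B) = -\tfrac12 c_{AB}(\varphi^0+\bar\varphi^0)\wedge\varphi^A\wedge\bar\varphi^B$, $d(\varphi^0\wedge\varphi^A\wedge\bar\varphi^B) = \tfrac12 c_{AB}\,\varphi^0\wedge\bar\varphi^0\wedge\varphi^A\wedge\bar\varphi^B$, $d(\bar\varphi^0\wedge\varphi^A\wedge\bar\varphi^B) = -\tfrac12 c_{AB}\,\varphi^0\wedge\bar\varphi^0\wedge\varphi^A\wedge\bar\varphi^B$ and $d(\varphi^0\wedge\bar\varphi^0\wedge\varphi^A\wedge\bar\varphi^B) = 0$.

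Then I would compute $C_{A,B}$ in the two cases. If $c_{AB} = 0$, then $d \equiv 0$ on $C_{A,B}$, so all four monomials are closed and, $C_{A,B}$ being a direct summand on which $d$ vanishes, none of them is exact; moreover, since $f_{IJ} \equiv 1$ under \eqref{eq: technical condition}, these four monomials are exactly the generators of $B^{*,*}$ attached to $(A,B)$ in \eqref{eq: generators of B^{p, q}} -- the monomial $(\varphi^0)^\varepsilon\wedge(\bar\varphi^0)^\delta\wedge\varphi^A\wedge\bar\varphi^B$ lies in $B^{|A|+\varepsilon,\,|B|+\delta}$ precisely when $c_{AB} = 0$. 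Hence $C_{A,B} \subseteq B^{*,*}$ and $\ker(d|_{C_{A,B}}) = C_{A,B} = (B^{*,*}\cap C_{A,B}) \oplus \operatorname{im}(d|_{C_{A,B}})$. If $c_{AB} \neq 0$, then $C_{A,B}$ is acyclic: $d$ is injective in the lowest degree of $C_{A,B}$; in the middle degree $\ker d$ is the line spanned by $(\varphi^0+\bar\varphi^0)\wedge\varphi^A\wedge\bar\varphi^B$, which equals the image of the previous differential; and in the top degree $\ker d = \operatorname{im}d$. Since no generator of $B^{*,*}$ lies in $C_{A,B}$ when $c_{AB} \neq 0$ (each such generator forces the relevant $c_{IJ}$ to vanish), we have $B^{*,*}\cap C_{A,B} = 0$, so again $\ker(d|_{C_{A,B}}) = \operatorname{im}(d|_{C_{A,B}}) = (B^{*,*}\cap C_{A,B})\oplus\operatorname{im}(d|_{C_{A,B}})$. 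Summing these identities over all $(A,B)$ and extracting the degree-$k$ part gives the invariant identity, since $\bigoplus_{p+q=k}B^{p,q}$ is precisely the degree-$k$ part of $\bigoplus_{c_{AB}=0}C_{A,B}$; together with the first paragraph this proves the proposition.

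The only genuine work is in the third paragraph: recognizing the splitting into the subcomplexes $C_{A,B}$, getting the four differentials and their signs right, and matching the four generators of $C_{A,B}$ (in the case $c_{AB} = 0$) with the four families in \eqref{eq: generators of B^{p, q}}. Everything else is formal homological algebra together with Hattori's theorem.
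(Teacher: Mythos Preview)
Your proposal is correct and follows essentially the same route as the paper: reduce to the Chevalley--Eilenberg complex via Hattori, compute $d$ on the four types of monomials $\varphi^I\wedge\bar\varphi^J$, $\varphi^0\wedge\varphi^I\wedge\bar\varphi^J$, $\bar\varphi^0\wedge\varphi^I\wedge\bar\varphi^J$, $\varphi^0\wedge\bar\varphi^0\wedge\varphi^I\wedge\bar\varphi^J$ (your formulas match the paper's exactly), and read off bases for $\ker d$ and $\operatorname{im} d$. Your packaging into the four-dimensional subcomplexes $C_{A,B}$ and your explicit verification that the invariant identity lifts to smooth forms are slightly more careful than the paper's ``we can work with invariant forms'', but the underlying argument is the same.
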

\begin{proof}
By the previous argument we can work with invariant forms, i.e., on $\frg^*_{\C}$. The space $\Lambda^k \frg^*_{\C}$ is generated by
\begin{enumerate}
\item $\varphi^I \wedge \bar{\varphi}^J$ with $|I| + |J| = k$;
\item $\varphi^0 \wedge \varphi^I \wedge \bar{\varphi}^J$ with $|I| + |J| = k - 1$;
\item $\bar{\varphi}^0 \wedge \varphi^I \wedge \bar{\varphi}^J$ with $|I| + |J| = k - 1$;
\item $\varphi^0 \wedge \bar{\varphi}^0 \wedge \varphi^I \wedge \bar{\varphi}^J$ with $|I| + |J| = k - 2$.
\end{enumerate}
Using the structure equation we easily compute that
\begin{enumerate}
\item $d \left( \varphi^I \wedge \bar{\varphi}^J \right) = -\frac{1}{2} c_{IJ} \left( \varphi^0 + \bar{\varphi}^0 \right) \wedge \varphi^I \wedge \bar{\varphi}^J$;
\item $d \left( \varphi^0 \wedge \varphi^I \wedge \bar{\varphi}^J \right) = \frac{1}{2} c_{IJ} \varphi^0 \wedge \bar{\varphi}^0 \wedge \varphi^I \wedge \bar{\varphi}^J$;
\item $d \left( \bar{\varphi}^0 \wedge \varphi^I \wedge \bar{\varphi}^J \right) = -\frac{1}{2} c_{IJ} \varphi^0 \wedge \bar{\varphi}^0 \wedge \varphi^I \wedge \bar{\varphi}^J$;
\item $d \left( \varphi^0 \wedge \bar{\varphi}^0 \wedge \varphi^I \wedge \bar{\varphi}^J \right) = 0$.
\end{enumerate}
As a consequence, a basis for $\ker \left( d: \Lambda^k \frg^*_{\C} \longrightarrow \Lambda^{k + 1} \frg^*_{\C} \right)$ is provided by the forms of type
\begin{enumerate}
\item $\varphi^I \wedge \bar{\varphi}^J$ such that $c_{IJ} = 0$ and $|I| + |J| = k$;
\item $\varphi^0 \wedge \varphi^I \wedge \bar{\varphi}^J$ such that $c_{IJ} = 0$ and $|I| + |J| = k - 1$;
\item $\bar{\varphi}^0 \wedge \varphi^I \wedge \bar{\varphi}^J$ such that $c_{IJ} = 0$ and $|I| + |J| = k - 1$;
\item $\left( \varphi^0 + \bar{\varphi}^0 \right) \wedge \varphi^I \wedge \bar{\varphi}^J$ such that $c_{IJ} \neq 0$ and $|I| + |J| = k - 1$;
\item $\varphi^0 \wedge \bar{\varphi}^0 \wedge \varphi^I \wedge \bar{\varphi}^J$ such that $|I| + |J| = k - 2$;
\end{enumerate}
while a basis for $\operatorname{im} \left( d: \Lambda^{k - 1} \frg^*_{\C} \longrightarrow \Lambda^k \frg^*_{\C} \right)$ is
\begin{enumerate}
\item $\left( \varphi^0 + \bar{\varphi}^0 \right) \wedge \varphi^I \wedge \bar{\varphi}^J$ such that $c_{IJ} \neq 0$ and $|I| + |J| = k - 1$;
\item $\varphi^0 \wedge \bar{\varphi}^0 \wedge \varphi^I \wedge \bar{\varphi}^J$ such that $c_{IJ} \neq 0$ and $|I| + |J| = k - 2$.
\end{enumerate}
Hence the proposition follows.
\end{proof}

\subsection{The \texorpdfstring{$\del\delbar$}{del-delbar}-Lemma}\label{sect: del-delbar-lemma}

In this section we show that the $\del\delbar$-Lemma holds for a Nakamura manifold if the parameter $\tau$ defining the lattice $\Gamma'_\tau$ is properly chosen. This shows item \eqref{item: del-delbar-lemma} of Theorem \ref{thm: main}.

By \cite[Paragraph (5.21)]{DGMS} a necessary condition for the $\del\delbar$-Lemma to hold on a compact complex manifold is that the Fr\"olicher spectral sequence degenerates at $E_1$. We state this fact as a separate result.

\begin{prop}
Let $N = N_{M, P, \tau}$ be a Nakamura manifold corresponding to a choice of $\tau$ satisfying \eqref{eq: technical condition}. Then the Fr\"olicher spectral sequence of $N$ degenerates at $E_1$.
\end{prop}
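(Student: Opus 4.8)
The plan is to show that the Fr\"olicher spectral sequence degenerates at $E_1$ by a dimension count: since $E_1^{p,q} = H^{p,q}_{\delbar}(N)$, degeneration at $E_1$ is equivalent to the equality
\[
\sum_{p + q = k} \dim_{\C} H^{p, q}_{\delbar}(N) = \dim_{\C} H^k_{dR}(N, \C)
\]
for every $k$. We have already computed both sides in terms of the combinatorial data $c_{IJ}$: the left-hand side is given by Proposition \ref{prop: Dolbeault numbers of N}, while the right-hand side can be read off from Proposition \ref{prop: ker d} together with the explicit description of $\ker d$ and $\operatorname{im} d$ on $\Lambda^* \frg^*_{\C}$ given there (recall that by complete solvability the de Rham cohomology of $N$ is the Chevalley--Eilenberg cohomology of $\frg$, by \cite[Corollary 4.2]{Hattori}).

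The key step is thus to compute $b_k(N) = \dim H^k_{dR}(N,\C)$ from the proof of Proposition \ref{prop: ker d}. From the listed bases for $\ker d$ and $\operatorname{im} d$ in degree $k$, after cancelling the two common types of generators one finds that $H^k_{dR}(N,\C)$ has a basis consisting of: the classes $[\varphi^I \wedge \bar\varphi^J]$ with $c_{IJ} = 0$ and $|I|+|J| = k$; the classes $[\varphi^0 \wedge \varphi^I \wedge \bar\varphi^J]$ with $c_{IJ} = 0$ and $|I|+|J| = k-1$; the classes $[\bar\varphi^0 \wedge \varphi^I \wedge \bar\varphi^J]$ with $c_{IJ} = 0$ and $|I|+|J| = k-1$; and the classes $[\varphi^0 \wedge \bar\varphi^0 \wedge \varphi^I \wedge \bar\varphi^J]$ with $c_{IJ} = 0$ and $|I|+|J| = k-2$ (here again using that, under the hypothesis \eqref{eq: technical condition}, $\varphi^0\wedge\bar\varphi^0\wedge\varphi^I\wedge\bar\varphi^J$ is closed for all $I,J$ but is exact precisely when $c_{IJ} \ne 0$). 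Consequently
\[
b_k(N) = \#\left\{(I, J) \text{ with } c_{IJ} = 0 \text{ and } \begin{array}{ll} |I| = p, & |J| = q\\ |I| = p - 1, & |J| = q\\ |I| = p, & |J| = q - 1\\ |I| = p - 1, & |J| = q - 1 \end{array} \text{ for some } p + q = k\right\},
\]
where each unordered pair of index-types is counted once. Comparing this with the formula of Proposition \ref{prop: Dolbeault numbers of N}, summed over $p + q = k$, we see that the four families of contributions match term by term: the pair $(I,J)$ with $|I|=p,|J|=q$ contributes to $h^{p,q}_{\delbar}$, the pair with $|I|=p-1,|J|=q$ to $h^{p,q}_{\delbar}$, and so on — each Dolbeault generator in bidegree summing to $k$ corresponds to exactly one de Rham generator in degree $k$ of the same "shape," and vice versa. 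Hence $\sum_{p+q=k} h^{p,q}_{\delbar}(N) = b_k(N)$.

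The main obstacle is bookkeeping rather than conceptual: one must be careful that the four types of generators in \eqref{eq: generators of B^{p, q}} are genuinely independent in $B^{p,q}$ (they involve distinct wedge monomials, so this is clear) and that in assembling $b_k$ from Proposition \ref{prop: ker d} no overcounting or double-cancellation occurs between the "$c_{IJ} = 0$" and "$c_{IJ} \ne 0$" families — in particular one checks that the generator $(\varphi^0 + \bar\varphi^0)\wedge\varphi^I\wedge\bar\varphi^J$ with $c_{IJ}\ne 0$, which appears in both the kernel basis and the image basis in the proof of Proposition \ref{prop: ker d}, cancels exactly, leaving only the $c_{IJ} = 0$ contributions. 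Once this is verified, the equality of Betti numbers with the sum of Hodge numbers gives degeneration of the Fr\"olicher spectral sequence at $E_1$, as the differentials $d_r$ for $r \ge 1$ can only strictly decrease the total dimension at $E_{r+1}$ unless they vanish.
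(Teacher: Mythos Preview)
Your argument is correct, but it is more circuitous than necessary and differs from the paper's route. The paper observes that, by Theorem~\ref{thm: dolbeault cohomology of N} and Proposition~\ref{prop: ker d}, every class in $H^{p,q}_{\delbar}(N)\simeq B^{p,q}$ is represented by a $d$-closed form; hence $d_1=\del$ vanishes on $E_1$, and (since $d$-closed representatives persist through all pages) every higher $d_r$ vanishes as well. You instead deduce degeneration from the numerical equality $\sum_{p+q=k} h^{p,q}_{\delbar}(N)=b_k(N)$. This is valid, but note that you do not actually need to unpack the bases from the \emph{proof} of Proposition~\ref{prop: ker d}: its \emph{statement} already gives $\ker d=\bigoplus_{p+q=k}B^{p,q}\oplus\operatorname{im} d$, whence $H^k_{dR}(N,\C)\simeq\bigoplus_{p+q=k}B^{p,q}\simeq\bigoplus_{p+q=k}H^{p,q}_{\delbar}(N)$, and the dimension equality follows immediately. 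The paper's approach is shorter and highlights the structural reason for degeneration (explicit $d$-closed representatives), while your dimension count has the advantage of being the textbook criterion and avoids any appeal to how higher Fr\"olicher differentials are built.
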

\begin{proof} 
The first page of the Fr\"olicher spectral sequence is given by
\[\left( E_1^{p, q}, d_1 \right) = \left( H^{p, q}_{\delbar}(N), \del \right).\]
By Theorem \ref{thm: dolbeault cohomology of N} we can identify $H^{p, q}_{\delbar}(N)$ with $B^{p, q}$ and we know from Proposition \ref{prop: ker d} that the generators of $B^{p, q}$ are $d$-closed. In particular, they are also $\del$-closed and so $\del \equiv 0$ on $H^{p, q}_{\delbar}(N)$.
\end{proof}

\begin{theorem}\label{thm: N satisfies the del-delbar-lemma}
Let $N = N_{M, P, \tau}$ be a Nakamura manifold corresponding to a choice of $\tau$ satisfying \eqref{eq: technical condition}. Then $N$ satisfies the $\del\delbar$-Lemma and it has no K\"ahler structure.
\end{theorem}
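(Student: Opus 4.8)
The plan is to verify the characterization of the $\del\delbar$-Lemma recalled in Section~\ref{preliminaries}, namely conditions (1), (2) and (3) following the definition, using the explicit descriptions of Dolbeault cohomology (Theorem~\ref{thm: dolbeault cohomology of N}), de Rham cohomology (Proposition~\ref{prop: ker d}), and the Hodge symmetry established in Proposition~\ref{prop: conjugation}. Since $\tau$ satisfies \eqref{eq: technical condition}, all these cohomologies are computed by invariant forms, so the whole argument takes place inside the finite-dimensional bigraded algebra $\Lambda^{*,*}\frg^*_\C$, where everything is an explicit linear-algebra bookkeeping over the distinguished generators $\varphi^I\wedge\bar\varphi^J$, $\varphi^0\wedge\varphi^I\wedge\bar\varphi^J$, $\bar\varphi^0\wedge\varphi^I\wedge\bar\varphi^J$, $\varphi^0\wedge\bar\varphi^0\wedge\varphi^I\wedge\bar\varphi^J$.

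First I would establish the de Rham version: from the bases for $\ker d$ and $\im d$ in Proposition~\ref{prop: ker d}, the quotient $H^k_{dR}(N)$ is spanned by the classes of $\varphi^I\wedge\bar\varphi^J$ (with $c_{IJ}=0$, $|I|+|J|=k$), of $\varphi^0\wedge\varphi^I\wedge\bar\varphi^J$ and $\bar\varphi^0\wedge\varphi^I\wedge\bar\varphi^J$ (with $c_{IJ}=0$, $|I|+|J|=k-1$), and of $\varphi^0\wedge\bar\varphi^0\wedge\varphi^I\wedge\bar\varphi^J$ (with $c_{IJ}=0$, $|I|+|J|=k-2$), and that the classes with $c_{IJ}\neq 0$ among the forms $(\varphi^0+\bar\varphi^0)\wedge\varphi^I\wedge\bar\varphi^J$ are killed. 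Comparing this list term by term with the generators of $B^{p,q}$ in \eqref{eq: generators of B^{p, q}} (now with $f_{IJ}\equiv 1$), one reads off a bijection between the de Rham generators and the generators of $\bigoplus_{p+q=k}B^{p,q}$, giving the second isomorphism of condition~(1); the first isomorphism $H^{p,q}_{\delbar}(N)\simeq\overline{H^{q,p}_{\delbar}(N)}$ is Proposition~\ref{prop: conjugation}. Moreover every de Rham generator in the list above is itself a pure $(p,q)$-form (or a sum of such), and each is $d$-closed by the computations in Proposition~\ref{prop: ker d}, so conditions (2) and (3) — existence of $d$-closed pure-type representatives — hold on the nose, with the two assignments $\omega\mapsto\bar\omega$ and $\omega\mapsto(\omega^{p,q})$ inducing exactly the two isomorphisms just described.

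Having verified (1)--(3), the $\del\delbar$-Lemma follows by the equivalence stated in Section~\ref{preliminaries} (quoting \cite[5.21, Proposition 5.17]{DGMS} and \cite{ASTT}). Finally, $N$ has no K\"ahler structure: it is a compact solvmanifold of completely solvable (in fact non-nilpotent, since $[\frg,[\frg,\frg]]=[\frg,\frg]\neq 0$) type which is not a finite quotient of a complex torus because $\frg$ is non-abelian, so Hasegawa's theorem \cite[Main Theorem]{Hasegawa} applies; alternatively, by Proposition~\ref{prop: N non p-Kahler} with $p=1$, $N$ is not $1$-K\"ahler, and being $1$-K\"ahler is the same as being K\"ahler.

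**Expected main obstacle.** The only delicate point is the bookkeeping in matching the de Rham generators with $\bigoplus_{p+q=k}B^{p,q}$: one has to check that the four families of invariant forms distribute correctly across bidegrees and that the $c_{IJ}\neq 0$ classes that survive in $\ker d$ (the $(\varphi^0+\bar\varphi^0)$-type) are precisely the images under $d$, so that nothing spurious contributes to either side. This is a finite, if slightly tedious, index-chasing argument and presents no conceptual difficulty once Propositions~\ref{prop: ker d} and~\ref{prop: conjugation} are in hand.
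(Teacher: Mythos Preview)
Your proposal is correct and follows essentially the same route as the paper: invoke Proposition~\ref{prop: conjugation} for the Hodge symmetry, Proposition~\ref{prop: ker d} together with Theorem~\ref{thm: dolbeault cohomology of N} for the decomposition $H^k_{dR}(N,\C)\simeq\bigoplus_{p+q=k}B^{p,q}\simeq\bigoplus_{p+q=k}H^{p,q}_{\delbar}(N)$, and then appeal to the characterization in \cite{ASTT,DGMS}; the non-K\"ahler part is handled via Hasegawa's theorem exactly as you do (the paper does not use the $p=1$ alternative, but it is of course valid). Your write-up is somewhat more explicit about the bookkeeping of generators than the paper's proof, but the ingredients and logic are the same.
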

\begin{proof} First of all, as already remarked, since $N$ is a compact solvmanifold of completely solvable type different from a torus, in view of Hasegawa \cite[Main Theorem]{Hasegawa} $N$ has no K\"ahler structure.

Let us show that $N$ satisfies the $\del\delbar$-Lemma.
This is an application of \cite[Corollary 8 and Definition 4]{ASTT}.

By Proposition \ref{prop: conjugation} we know that complex conjugation on $H^{p, q}_{\delbar}(N) \simeq B^{p, q}$ is a well defined isomorphism onto $H^{q, p}_{\delbar}(N)$.

By Proposition \ref{prop: ker d} we know that
\[H^k_{dR}(N, \C) = \frac{\ker \left( d: \Lambda^k T^*_{\C}N \longrightarrow \Lambda^{k + 1} T^*_{\C}N \right)}{\operatorname{im} \left( d: \Lambda^{k - 1} T^*_{\C}N \longrightarrow \Lambda^k T^*_{\C}N \right)} = \bigoplus_{p + q = k} B^{p, q} \simeq \bigoplus_{p + q = k} H^{p, q}_{\delbar}(N),\]
so that (1), (2) and (3) of Section \ref{preliminaries} hold. This concludes the proof.




\end{proof}

\begin{rem}
In \cite[Theorem 3.4]{Kasuya1} Kasuya proved that under some suitable assumption on the solvable Lie group $G=\C^n\ltimes_\phi\C^m$ it is possible to define a Hermitian metric on a compact quotient of $G$ so that the Hodge symmetry and decomposition hold for Dolbeault and de Rham harmonic forms. In particular, such manifolds satisfy the $\del\delbar$-Lemma.
\end{rem}

\begin{rem}
In \cite[Proposition 4.3]{COUV} the authors constructed a $3$-dimensional compact  complex manifold $M$ such that 
\begin{enumerate}
    \item the Fr\"olicher spectral sequence of $M$ degenerates at $E_1$\\
    \item $h^{p,q}_{\delbar}(M)
    =h^{q,p}_{\delbar}(M)$
\end{enumerate}
but $M$ does not satisfy the $\del\delbar$-Lemma. 
\end{rem}
\section{Fibred products and explicit examples}\label{sect: examples}

Let $N = N_{M, P, \tau}$ be the generalized Nakamura manifold associated to the group $G = G_M = \C \ltimes_{\rho} \C^n$ and its lattice $\Gamma = \Gamma_{P, \tau} = \Gamma'_\tau \ltimes_{\rho} \Gamma''_P$ as in Section \ref{sect: construction}. The short exact sequence
\[1 \longrightarrow \C^n \longrightarrow G \longrightarrow \C \longrightarrow 1\]
and the projection $G \longrightarrow \C$ is equivariant map with respect to the action of left translations by elements in $\Gamma$ on $G$ and in $\Gamma'$ on $\C$.
This means that it is induced a map
\[\begin{array}{rccc}
\pi: & N = \Gamma \backslash G & \longrightarrow & E_\tau = \Gamma'_\tau \backslash \C\\
 & \Gamma \cdot (w, z_1, \ldots, z_n) & \longmapsto & \Gamma'_\tau + w
\end{array}\]
which is a holomorphic fibration on $E$ with fibre the $n$-dimensional torus $F_P \simeq \Gamma''_P \backslash \C^n$.

Consider now two generalized Nakamura manifolds, associated to the groups $G_i = \C \ltimes_{\rho_i} \C^{n_i}$ and lattices $\Gamma_i = \Gamma'_\tau \ltimes_{\rho_i} \Gamma''_i$ for $i = 1, 2$. Observe that with this choice of $\Gamma_1$ and $\Gamma_2$ both $N_1$ and $N_2$ fibre over the same complex torus $E_\tau = \Gamma'_\tau \backslash \C$, so it is possible to consider their fibre product
\begin{equation}\label{eq: fibre product}
\xymatrix{N_1 \times_E N_2 \ar[r] \ar[d] & N_1 \ar[d]^{\pi_1}\\
N_2 \ar[r]^{\pi_2} & E_\tau.}
\end{equation}
The maps $N_1 \times_{E_\tau} N_2 \longrightarrow N_1$ and $N_1 \times_{E_\tau} N_2 \longrightarrow N_1$ are fibrations of fibre $F_2 = \Gamma''_2 \backslash \C^{n_2}$ and $F_1 = \Gamma''_1 \backslash \C^{n_1}$ respectively.

\begin{prop}\label{prop: fibre product}
Let $N = N_1 \times_{E_\tau} N_2$ be as above. Then $N$ is a generalized Nakamura manifold.
\end{prop}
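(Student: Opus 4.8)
The plan is to show directly that the fibre product $N = N_1 \times_{E_\tau} N_2$ is isomorphic (as a complex manifold) to a generalized Nakamura manifold in the sense of Section \ref{sect: construction}, by exhibiting the relevant solvable group, its matrix $M$, the conjugating matrix $P$, and checking that the lattice is of the required form. First I would identify the total space: since $N_i = \Gamma_i \backslash G_i$ with $G_i = \C \ltimes_{\rho_i} \C^{n_i}$ and both fibrations $\pi_i$ are induced by the projection $G_i \to \C$ onto the first coordinate, the fibre product over $E_\tau$ is the quotient by $\Gamma'_\tau \ltimes_{(\rho_1,\rho_2)} (\Gamma''_1 \oplus \Gamma''_2)$ of the group $G = \C \ltimes_{\rho} \C^{n_1 + n_2}$, where $\rho(w) = \mathrm{diag}(\rho_1(w), \rho_2(w))$ acts diagonally. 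This is exactly the ``diagonal'' semidirect product, and the fibre over the common base is the product torus $F_1 \times F_2$.

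Next I would check that this $G$ is of the form $G_M$ for a suitable $M \in \SL(n_1+n_2,\Z)$. Each $G_i$ comes from a matrix $M_i \in \SL(n_i,\Z)$ with $\rho_i$ built from the logarithms $\lambda^{(i)}_1, \ldots, \lambda^{(i)}_{n_i}$ of its eigenvalues, so the block-diagonal matrix $M = M_1 \oplus M_2$ lies in $\SL(n_1 + n_2, \Z)$ (determinant multiplies), is diagonalizable over $\R$ with positive eigenvalues, and its eigenvalue-logarithms are precisely the concatenated list $\{\lambda^{(1)}_j\} \cup \{\lambda^{(2)}_j\}$; one has $PMP^{-1}$ diagonal for $P = P_1 \oplus P_2$. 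By construction the associated homomorphism $\rho$ of Section \ref{sect: construction} coincides with the diagonal $\rho$ above, so $G = G_M$. Finally one checks the lattice: $\Gamma'_\tau$ is the same for both factors by hypothesis, and $\Gamma''_P = P\cdot\Z^{n_1+n_2} \oplus \sqi P \cdot \Z^{n_1+n_2} = \Gamma''_1 \oplus \Gamma''_2$ since $P = P_1 \oplus P_2$ is block diagonal; the invariance condition $\rho(\gamma)\cdot\Gamma''_P = \Gamma''_P$ for $\gamma \in \Gamma'_\tau$ holds blockwise because it holds for each $N_i$. Hence $\Gamma = \Gamma'_\tau \ltimes_\rho \Gamma''_P$ is exactly the lattice of Section \ref{sect: construction}, and $N = \Gamma\backslash G_M = N_{M, P, \tau}$.

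The only subtlety — and the step I would be most careful about — is checking that the holomorphic structures match: the fibre product is taken in the category of complex manifolds, and I must verify that the complex structure it inherits agrees with the left-invariant complex structure on $\Gamma\backslash G_M$ coming from the standard coordinates $(w, z_1, \ldots, z_{n_1+n_2})$ on $\C^{1+n_1+n_2}$. This follows because on each $G_i$ the identification $G_i \cong \C^{1+n_i}$ as a complex manifold is via $(w, z^{(i)})$, the maps $\pi_i$ are the holomorphic projections $w \mapsto w$, and the fibre product of these holomorphic maps is realized on the level of universal covers as $\C \times_\C \C^{n_1} \times_{\C} \C^{n_2} \cong \C^{1+n_1+n_2}$ with its product complex structure, which descends compatibly. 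Once this coordinate identification is in place, everything else is the bookkeeping of block-diagonal matrices described above, and the proposition follows.
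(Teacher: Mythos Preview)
Your argument is correct and follows the same route as the paper's proof: both identify $N_1 \times_{E_\tau} N_2$ with the quotient of $\C \ltimes_{\rho} \C^{n_1+n_2}$, where $\rho(w) = \diag(\rho_1(w),\rho_2(w))$, by the lattice $\Gamma'_\tau \ltimes_\rho (\Gamma''_1 \oplus \Gamma''_2)$. Your version is in fact more explicit than the paper's, since you exhibit the data $M = M_1 \oplus M_2$, $P = P_1 \oplus P_2$ and verify the compatibility of complex structures, all of which the paper leaves implicit.
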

\begin{proof}
Indeed, observe that we have an isomorphism
\[\begin{array}{ccc}
G_1 \times_\C G_2 & \longrightarrow & \C \ltimes_\rho \C^{n_1 + n_2}\\
((w, z^{(1)}_1, \ldots, z^{(1)}_{n_1}), ((w, z^{(2)}_1, \ldots, z^{(2)}_{n_2}))) & \longmapsto & (w, z^{(1)}_1, \ldots, z^{(1)}_{n_1}, z^{(2)}_1, \ldots, z^{(2)}_{n_2})
\end{array}\]
where
\[\begin{array}{rccl}
\rho: & \C & \longmapsto & \GL(n_1 + n_2, \C)\\
 & w & \longmapsto & \diag\left( e^{\frac{1}{2}\lambda^{(1)}_1 (w + \bar{w})}, \ldots, e^{\frac{1}{2}\lambda^{(1)}_n (w + \bar{w})}, e^{\frac{1}{2}\lambda^{(2)}_1 (w + \bar{w})}, \ldots, e^{\frac{1}{2}\lambda^{(2)}_n (w + \bar{w})} \right).
\end{array}\]
Via this isomorphism $\Gamma_1 \ltimes_\C \Gamma_2$ corresponds to $\Gamma' \ltimes_\rho (\Gamma''_1 \times \Gamma''_2)$ and so $N = (\Gamma' \ltimes_\rho (\Gamma''_1 \times \Gamma''_2)) \backslash (G_1 \ltimes_\rho G_2)$ is a generalized Nakamura manifold.
\end{proof}

\begin{rem}
Let $N$ be as in Proposition \ref{prop: fibre product}. Then $N$ fibres over the same genus $1$ curve $E_\tau$ as $N_1$ and $N_2$ and the map $N \longrightarrow E_\tau$ is the composition of the arrows in \eqref{eq: fibre product}.
\end{rem}

\begin{ex}\label{example: generalized Nakamura 3-folds}
Let $A_m \in \SL(2, \Z)$ be the matrix
\[A_m = \left( \begin{array}{cc}
m & m^2 - 1\\
1 & m
\end{array} \right), \qquad m \in \Z_{> 1}.\]
Then $A_m$ has two distinct real eigenvalues
\[m + \sqrt{m^2 - 1} \qquad \text{and} \qquad m - \sqrt{m^2 - 1}\]
and we can use
\[\lambda_1 = \log(m + \sqrt{m^2 - 1}), \qquad \lambda_2 = \log(m - \sqrt{m^2 - 1}), \qquad P = \frac{1}{2 \sqrt{m^2 - 1}} \left( \begin{array}{cc}
1 & \sqrt{m^2 - 1}\\
-1 & \sqrt{m^2 - 1}
\end{array} \right),\]
and $\mu \in \left( 0, -\frac{\pi}{\log(m - \sqrt{m^2 - 1})} \right)$ to produce examples of $3$-dimensional generalized Nakamura manifolds.
\end{ex}

\begin{ex}\label{example: generalized Nakamura 4-folds}
Let $A \in \SL(3, \Z)$ be the symmetric matrix
\[A_m = \left( \begin{array}{ccc}
2 & 1 & -2\\
1 & 1 & -1\\
-2 & -1 & 3
\end{array} \right), \qquad m \in \Z_{> 1}.\]
Then $A$ has three distinct positive real eigenvalues, and so it can be used to produce an example of a $4$-dimensional Nakamura manifold.
\end{ex}

\begin{ex}
Let now $n \in \Z_{\geq 2}$. We can produce an $(n + 1)$-dimensional generalized Nakamura manifold $N$ as follows using Proposition \ref{eq: fibre product}:
\begin{itemize}
\item if $n$ is even then $N$ is the fibre product of $\frac{n}{2}$ generalized Nakamura $3$-folds from Example \ref{example: generalized Nakamura 3-folds};
\item if $n$ is odd then $N$ is the fibre product of $\frac{n - 3}{2}$ generalized Nakamura $3$-folds from Example \ref{example: generalized Nakamura 3-folds} and a generalized Nakamura $4$-fold from Example \ref{example: generalized Nakamura 4-folds}.
\end{itemize}
\end{ex}


\end{document}